\documentclass[11pt]{article}
\usepackage[T1]{fontenc}
\usepackage{lmodern}
\usepackage{microtype}
\usepackage[margin=1.08in]{geometry}
\usepackage{amsmath,amssymb,amsthm,mathtools}
\usepackage{xcolor}
\usepackage{hyperref}
\usepackage{aliascnt}
\usepackage[nameinlink,noabbrev]{cleveref}
\hypersetup{
  colorlinks=true,
  linkcolor=blue!45!black,
  citecolor=blue!45!black,
  urlcolor=blue!45!black,
  pdftitle={Affine Copies of Three-Point Patterns in Sets of Integers},
  pdfauthor={Samuel Korsky},
  pdfsubject={Extremal counting of affine copies of three-point integer patterns},
  pdfkeywords={additive combinatorics, affine copies, three-point patterns, linear equations, extremal counting}
}
\allowdisplaybreaks
\numberwithin{equation}{section}
\newtheorem{theorem}{Theorem}[section]
\newaliascnt{lemma}{theorem}
\newtheorem{lemma}[lemma]{Lemma}
\aliascntresetthe{lemma}
\newaliascnt{proposition}{theorem}
\newtheorem{proposition}[proposition]{Proposition}
\aliascntresetthe{proposition}
\theoremstyle{remark}
\newaliascnt{remark}{theorem}

\aliascntresetthe{remark}
\crefname{theorem}{Theorem}{theorems}
\crefname{lemma}{Lemma}{lemmas}
\crefname{proposition}{Proposition}{propositions}
\crefname{remark}{Remark}{remarks}
\newcommand{\Z}{\mathbb Z}
\newcommand{\pos}[1]{(#1)_{+}}
\newcommand{\Fq}{\mathbb Z/q\mathbb Z}
\title{Affine Copies of Three-Point Patterns in Sets of Integers}
\author{Samuel Korsky}
\date{August 29, 2026}
\begin{document}
\maketitle
\begin{abstract}
\noindent
Let $P=\{0,a,b\}$, where $0<a<b$ and $\gcd(a,b)=1$.  For a finite
set $A\subset\Z$, let $M_P^+(A)$ count the copies
$x,x+ad,x+bd\in A$ with $d>0$, and let $M_P(A)$ count the copies with
any $d\ne0$.  We prove that every such three-point pattern other than the
arithmetic progression $\{0,1,2\}$ satisfies
\[
 M_P^+(A)\le \frac{99}{400}|A|^2+O(|A|),
 \qquad
 M_P(A)\le \frac{13}{28}|A|^2+O_P(|A|).
\]
For the particular pattern $P=\{0,1,3\}$ -- the subject of a question raised
by Ganguly and recorded as Problem~24 in Green's list of open problems -- we
sharpen the bound allowing both signs of the dilation to
\[
 M_{\{0,1,3\}}(A)\le \frac{47}{122}|A|^2+O(|A|).
\]
\end{abstract}

\section{Introduction}
After translating a three-point integer pattern and dividing all of its
differences by their common divisor, it has a unique form
\[
 P=\{0,a,b\},\qquad 0<a<b,\qquad \gcd(a,b)=1.
\]
We call this the \emph{coprime form} of the pattern.  For a finite set
$A\subset\Z$, define
\begin{align}
 M_P^+(A)
 &=\#\{(x,d)\in\Z^2:d>0,\ x,x+ad,x+bd\in A\},
 \label{eq:def-MP-plus}\\
 M_P(A)
 &=\#\{(x,d)\in\Z^2:d\ne0,\ x,x+ad,x+bd\in A\}.
 \label{eq:def-MP}
\end{align}
We call $d$ the dilation.  Thus $M_P^+$ counts only positive dilations,
while $M_P$ counts both signs.

Because $\gcd(a,b)=1$, any affine image $x+\lambda P$ contained in $\Z$ has
$x,\lambda\in\Z$: the integrality of $a\lambda$ and $b\lambda$, together with
B\'ezout's identity, implies that $\lambda$ is an integer.  Thus the affine
copies in the title are exactly the copies counted in
\eqref{eq:def-MP-plus} and \eqref{eq:def-MP}.

Translation of the pattern does not change either count.  Dividing all
pairwise differences by their common gcd can only enlarge the set of allowed
integer dilations, so an upper bound proved for the coprime form also holds
before this division.

Reflecting the pattern gives
\[
 P^\vee=\{0,b-a,b\}.
\]
Reflecting the set and changing the sign of $d$ give
\begin{equation}\label{eq:reflection-relations}
 M_P^+(A)=M_{P^\vee}^+(-A),
 \qquad
 M_P(A)=M_P^+(A)+M_{P^\vee}^+(A).
\end{equation}
Write
\[
 m_P^+(n)=\max_{|A|=n}M_P^+(A),
 \qquad
 m_P(n)=\max_{|A|=n}M_P(A),
\]
and
\[
 \gamma_P^+=\limsup_{n\to\infty}\frac{m_P^+(n)}{n^2},
 \qquad
 \gamma_P=\limsup_{n\to\infty}\frac{m_P(n)}{n^2}.
\]
Equation~\eqref{eq:reflection-relations} implies
$\gamma_P^+=\gamma_{P^\vee}^+$.

\subsection*{Basic bounds and previous work}
For a positive integer $n$, write $[n]=\{1,\ldots,n\}$. When $d>0$, the point $x+ad$ lies strictly between the other two points.
This observation gives the following general bound.
\begin{proposition}[The basic order bound and the interval count]
\label{prop:benchmarks}
Let $P=\{0,a,b\}$ with $0<a<b$, let $A\subset\Z$, and put $n=|A|$.
Then
\begin{align}
 M_P^+(A)&\le \left\lfloor\frac{(n-1)^2}{4}\right\rfloor,
 \label{eq:naive-upper-plus}\\
 M_P(A)&\le 2\left\lfloor\frac{(n-1)^2}{4}\right\rfloor.
 \label{eq:naive-upper}
\end{align}
Moreover, if $r=\lfloor(n-1)/b\rfloor$, then
\begin{align}
 M_P^+([n])&=rn-\frac b2r(r+1)=\frac1{2b}n^2+O_P(n),
 \label{eq:interval-general-plus}\\
 M_P([n])&=2rn-br(r+1)=\frac1b n^2+O_P(n).
 \label{eq:interval-general}
\end{align}
\end{proposition}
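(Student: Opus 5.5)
Both upper bounds follow from the ordering observation made just before the statement. For $d>0$ the triple $x<x+ad<x+bd$ is an increasing triple of elements of $A$, and the pair $(x,d)$ is determined by its outer points $x$ and $x+bd$. Write the elements of $A$ as $a_1<\dots<a_n$. The plan is to charge each copy to its middle element $y=x+ad=a_k$: given $y$, the copy is determined by its left endpoint $x=a_i$ with $i<k$, and also by its right endpoint $a_j$ with $j>k$, because $x=y-ad$ and $x+bd=y+(b-a)d$ are both injective functions of $d$. Hence the middle element $a_k$ lies in at most $\min(k-1,n-k)$ copies. Summing,
\[
M_P^+(A)\le\sum_{k=1}^n\min(k-1,n-k)=\left\lfloor\frac{(n-1)^2}{4}\right\rfloor .
\]
The closing identity is a routine computation, checked separately for $n$ odd and $n$ even. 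By \eqref{eq:reflection-relations}, $M_P(A)=M_P^+(A)+M_{P^\vee}^+(A)$. The bound just proved holds for every pattern with $0<a<b$, and in particular for $P^\vee$, so \eqref{eq:naive-upper} follows. The only point needing care is that one middle element can carry at most $\min$ of its left and right ranks; the injectivity in $d$ gives exactly this.

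For the interval, take $A=[n]$. A positive copy requires $1\le x$ and $x+bd\le n$, and then every point lies in $[n]$ automatically. For fixed $d$ there are $\max(0,n-bd)$ choices of $x$, and this is positive exactly when $d\le r=\lfloor (n-1)/b\rfloor$. Therefore
\[
M_P^+([n])=\sum_{d=1}^{r}(n-bd)=rn-\frac b2 r(r+1).
\]
Substituting $r=(n-1)/b+O(1)$ gives $\frac{n^2}{b}-\frac{n^2}{2b}+O_P(n)=\frac{1}{2b}n^2+O_P(n)$.

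For both signs of $d$, we use $-[n]=[n]-(n+1)$ together with translation invariance and \eqref{eq:reflection-relations}. These give $M_{P^\vee}^+([n])=M_P^+(-[n])=M_P^+([n])$, so $M_P([n])=2M_P^+([n])$, which is \eqref{eq:interval-general}. Alternatively, this holds directly because the count for $d<0$ depends only on the span $b|d|$. There is no real obstacle here; the only place to be careful is the exact formula $\sum\min(k-1,n-k)=\lfloor (n-1)^2/4\rfloor$.
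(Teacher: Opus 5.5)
Your proposal is correct and follows essentially the paper's argument: each middle point $a_k$ lies in at most $\min\{k-1,n-k\}$ copies because either outer point determines $d$, and for the interval there are $n-bd$ choices of $x$ for each $1\le d\le r$. The only difference is cosmetic: you get the two-sign statements from the reflection identity $M_P(A)=M_P^+(A)+M_{P^\vee}^+(A)$ (plus translation invariance for $[n]$) instead of repeating the order argument for $d<0$, which amounts to the same thing.
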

\begin{proof}
Write $A=\{u_1<\cdots<u_n\}$.  Suppose that $x+ad=u_i$ in a copy with
$d>0$.  Choosing either the lower point $x$ or the upper point $x+bd$
determines $d$, and hence determines the whole copy.  There are $i-1$
choices below $u_i$ and $n-i$ choices above it, so at most
$\min\{i-1,n-i\}$ copies can have middle point $u_i$.  Summing over $i$
gives \eqref{eq:naive-upper-plus}.  Repeating the same argument for $d<0$
gives \eqref{eq:naive-upper}.

For the interval $[n]$, a fixed positive $d$ allows exactly $n-bd$ choices
of $x$, provided $1\le d\le r$.  Summing over $d$ gives the two interval
formulas.
\end{proof}
The copies of $P$ are precisely the nonconstant integer solutions of
\begin{equation}\label{eq:general-pattern-equation}
 (b-a)X-bY+aZ=0.
\end{equation}
Indeed, the condition $\gcd(a,b)=1$ implies that every nonconstant solution
has $Y-X=ad$ and $Z-X=bd$ for a unique $d\ne0$.  Aaronson proved that every
three-variable linear equation with nonzero coefficients has arbitrarily
large sets containing at least $(1/12+o(1))n^2$ solutions
\cite{Aaronson}.  Constant solutions do not affect the coefficient of
$n^2$.  By \eqref{eq:reflection-relations}, one sign of $d$ accounts for at
least half of these solutions, and reflecting the set converts that sign to
$d>0$.  Therefore
\begin{align}
 \max\left\{\frac1{2b},\frac1{24}\right\}
 &\le \gamma_P^+\le\frac14,
 \label{eq:benchmark-window-plus}\\
 \max\left\{\frac1b,\frac1{12}\right\}
 &\le \gamma_P\le\frac12.
 \label{eq:benchmark-window}
\end{align}
Aaronson also showed that $1/12$ is the best universal lower coefficient if
one allows arbitrary triples of nonzero coefficients.  That result does not
determine the sharper behavior of \eqref{eq:general-pattern-equation}, which
is unchanged when the same integer is added to $X,Y,Z$.

A complementary geometric formulation was studied by \'Abrego,
Fern\'andez-Merchant, Katz and Kolesnikov \cite{AFMKK}.  Their order method
gives the upper bound coefficient $1/4$ when only one sign of the dilation is counted.
For $\{0,1,3\}$ they also constructed sets with positive-dilation density
$3/16$.  Thus intervals do not always maximize the positive-dilation count,
and the particular gaps in the pattern still matter after the basic order
bound has been applied.

The arithmetic progression $P=\{0,1,2\}$ is exceptional.  The interval
formulas give
\[
 M_{\{0,1,2\}}^+([n])=\frac14n^2+O(n),
 \qquad
 M_{\{0,1,2\}}([n])=\frac12n^2+O(n).
\]
Our first two theorems show that no other pattern in coprime form reaches
either coefficient.

\begin{theorem}[Positive dilations for all coprime three-point patterns]
\label{thm:positive-general}
Let $P=\{0,a,b\}$ satisfy $0<a<b$ and $\gcd(a,b)=1$, and suppose
$P\ne\{0,1,2\}$.  Then every finite nonempty $A\subset\Z$ of size $n$ satisfies
\begin{equation}\label{eq:positive-general-bound}
 M_P^+(A)\le \frac{99}{400}n^2+(n-1).
\end{equation}
Equivalently, $\gamma_P^+\le99/400$.
\end{theorem}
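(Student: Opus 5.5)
The plan is to sharpen the order argument of \cref{prop:benchmarks} by a quantity of order $n^2/400$. By \eqref{eq:reflection-relations} we have $\gamma_P^+=\gamma_{P^\vee}^+$, and passing to $-A$ exchanges $a$ and $b-a$. So we may assume $b-a\ge a$; since $P\ne\{0,1,2\}$ this forces $\lambda:=(b-a)/a>1$ (indeed $b-a>a$ strictly, because $\gcd(a,b)=1$ and $b=2a$ only when $a=1$).

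Write $A=\{u_1<\dots<u_n\}$ and fix a middle point $y=u_i$. The copies with middle $y$ are determined by their lower point $x<y$, and the upper point is the reflection-dilation
\[
 z=y+\lambda(y-x).
\]
This map is order-reversing on the lower points. Consequently, if the middle point $u_i$ nearly attains the bound $\min\{i-1,n-i\}$, then the $k$-th point below $u_i$ must be matched with the $k$-th point above it for almost all $k$. That forces $u_{i+k}-u_i=\lambda(u_i-u_{i-k})$ for most $k\le\min\{i-1,n-i\}$.

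\textbf{Step 1 (spans).} The crude consequence of this rigidity is a span constraint: the number of copies with middle $u_i$ is at most
\[
 \min\bigl\{\,i-1,\ n-i,\ \#\{j<i:\ u_i-u_j\le (u_n-u_i)/\lambda\},\ \#\{j>i:\ u_j-u_i\le \lambda(u_i-u_1)\}\,\bigr\}.
\]
I would normalize $A$ through its counting function $F(t)=\#\{j:u_j\le t\}/n$ on $[u_1,u_n]$. Summing this bound over $i$ turns the problem into a continuous extremal problem over nondecreasing $F$. Because the lower span grows with $i$ while the upper span shrinks, the ratio (upper span)/(lower span) equals $\lambda$ at essentially one location. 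Away from that location one of the last two terms in the minimum is binding and the order bound loses a positive proportion. Optimizing this variational problem should give a coefficient $c(\lambda)<1/4$ that is uniform for $\lambda\ge\lambda_0$, where $\lambda_0>1$ is some fixed constant, say $\lambda_0=3/2$ (which covers $a=1,2$).

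\textbf{Step 2 (arithmetic, $\lambda$ near $1$).} This step is not uniform in $\lambda$, because $\lambda=(a+1)/a$ with $b=2a+1$ tends to $1$. Here arithmetic must take over. Every copy has $y\equiv x\pmod a$ and $z\equiv y\pmod{b-a}$. Split $A$ into residue classes modulo $a$ of sizes $n_c$. Applying the order bound within each class gives at most $\sum_c n_c^2/4$ copies, which is already far below $n^2/4$ unless one class carries almost all of $A$. If a single class dominates, write it as $c+aA'$. The condition that $z$ lies in the same class modulo $a$ requires $a\mid (b-a)d$, hence $a\mid d$. The dominant class therefore carries a copy of the same problem in which the dilations are restricted to multiples of $a$. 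I would set up an induction on $n$ with a potential of the form $\frac{99}{400}n^2+(n-1)$ and use the fact that the linear term is superadditive enough to absorb the cross-class losses. The additive $(n-1)$ comes from the diagonal/boundary terms $\lfloor (n-1)^2/4\rfloor$ in \eqref{eq:naive-upper-plus}.

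\textbf{Step 3 (combining).} Finally, the two regimes must be merged so that the worse of the two constants is at most $99/400$. I expect the main obstacle to be the quantitative variational problem in Step 1 for $\lambda$ in the intermediate range, together with making the threshold $\lambda_0$ compatible with the arithmetic loss available in Step 2. My first attempt would be to choose a piecewise-linear extremal $F$ with two breakpoints. The specific value $99/400$ suggests a two-parameter optimization with rational optimum, so this is where I expect the constant to come from. I would then verify that the residue argument covers every $a\ge 3$ with room to spare.
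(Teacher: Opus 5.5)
Your proposal has a genuine gap. Step 1 cannot give any coefficient below $1/4$, for any $\lambda$. Step 2 is missing the two quantitative ingredients the residue argument needs.

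\textbf{Step 1 fails.} The two span terms only compare distances to the extreme points $u_1,u_n$. Both become vacuous for points near the balance point $u_1+\frac ab(u_n-u_1)$; note that $1/(1+\lambda)=a/b$. Take $A$ to be $0$, $K$, and $n-2$ consecutive integers centred at $Ka/b$, with $K$ huge.
\begin{itemize}
\item For a cluster point $u_i<Ka/b$, every $u_j<u_i$ satisfies $u_i-u_j\le (K-u_i)/\lambda$, and every cluster point above $u_i$ lies within $\lambda(u_i-u_1)$. So your minimum is $\min\{i-1,n-1-i\}$.
\item For $u_i>Ka/b$, the minimum is $\min\{i-2,n-i\}$.
\end{itemize}
The summed bound is therefore $n^2/4-O(n)$. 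Your variational problem over $F$ thus has value exactly $1/4$, attained by an $F$ that jumps at the balance point. Yet this set has only about $n^2/(2b)$ genuine copies.

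The rigidity you appeal to is also overstated. Matching the $k$-th point below with the $k$-th point above is forced only when $i-1\approx n-i$. For $i$ far from $n/2$, the $i-1$ lower points may be sent to any $i-1$ of the $n-i$ upper points. So the span relaxation yields no order-only regime, and arithmetic must carry every pattern. That includes $a=1$, where your modulus $a$ is useless. The paper instead reflects to $P=\{0,a,a+g\}$ with $g>1$ and works modulo $q=p^{v_p(g)}$ for every pattern.

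\textbf{Step 2 misses two ingredients.} First, only the pair $(x,y)$ is forced into one class modulo $a$; $z\equiv x$ holds only when $a\mid d$. So there is no within-class bound $\sum_c n_c^2/4$. Combining the order argument with a congruence on one pair gives only the loss $N^2G(\nu/N)$ of \cref{lem:residue-spread}. That is about $\nu^2/4$ when $\nu$ elements lie outside the largest class. It settles $99/400$ directly only when $\nu_q(A)\ge n/10$, since $G(1/10)=1/400$.

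Second, suppose one class of size $u$ dominates and the remaining $v$ elements lie elsewhere. The mixed copies are a priori of order $uv$. Induction gives only $\frac{99}{400}(u^2+v^2)$ for the pure parts. So unless the dominant part falls short of its inductive bound, the mixed copies must number at most about $\frac{99}{200}uv$. That is a quadratic requirement, which the linear term $(n-1)$ cannot absorb.

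The missing idea is \cref{prop:largest-class-split}. For a fixed outside point $x$, the congruence fixes $d\bmod q$. After the dominant class is rescaled to $B$, the other two points must therefore lie in classes of $B$ whose labels differ by a fixed nonzero $h$. By \cref{lem:fixed-difference}, this leaves at most $\min\{u,\frac{p}{p-1}\nu_q(B)\}$ choices. Hence mixed copies are few when $B$ is concentrated, while a spread-out $B$ loses $G(s)u^2$ by \cref{lem:residue-spread}. \cref{lem:numeric-99} checks that this trade-off closes the induction when $v<u/9$.
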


\begin{theorem}[Both signs of the dilation for all coprime three-point patterns]
\label{thm:both-signs-general}
Let $P=\{0,a,b\}$ satisfy $0<a<b$ and $\gcd(a,b)=1$, and suppose
$P\ne\{0,1,2\}$.  Choose an odd prime $p$ dividing one of the three
differences $a$, $b$, or $b-a$; such a prime exists by
\cref{lem:odd-prime}.  Let
\[
 q=p^{v_p(\Delta)},
\]
where $\Delta$ is the chosen difference and $v_p(\Delta)$ is the exponent
of $p$ in its prime factorization.  When several choices are available, one
may select the one minimizing $q^2(q-1)$.  If $A\subset\Z$ is finite and nonempty
and $|A|=n$, then
\begin{equation}\label{eq:general-bound}
 M_P(A)
 \le \frac{13}{28}(n^2-n)+\frac{q^2(q-1)}4(n-1).
\end{equation}
In particular, $\gamma_P\le13/28$.
\end{theorem}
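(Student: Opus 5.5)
The plan is to exploit the congruence forced by the odd prime $p$. Let $\Delta\in\{a,b,b-a\}$ be the chosen difference and $q=p^{v_p(\Delta)}$. In every copy $\{x,x+ad,x+bd\}$ the two points whose difference is $\Delta d$ are congruent modulo $q$. Since $\gcd(a,b)=1$, the other two differences are prime to $p$. Consequently the third point lies in a residue class that differs from the common class of the other two by a nonzero multiple of $d$, unless $q\mid d$.

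Partition $A=\bigsqcup_{r\in\Z/q\Z}A_r$ by residues and write $n_r=|A_r|$. I would split the copies into two kinds.
\begin{itemize}
\item \emph{Internal copies} have all three points in one class $A_r$. This forces $q\mid d$. After the affine map $y\mapsto (y-r)/q$ these become copies of the same pattern $P$ in a set of size $n_r$, so they can be handled by induction on $n$ (or by iteration).
\item \emph{Mixed copies} have the $\Delta$-pair in $A_r$ and the third point in $A_s$ with $s\ne r$.
\end{itemize}

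For mixed copies I would use two competing estimates.
\begin{itemize}
\item \emph{Pair bound.} An unordered pair $\{y,z\}\subset A_r$ determines $d=\pm(z-y)/\Delta$, so it contributes at most two copies. Hence the number of copies whose $\Delta$-pair lies in $A_r$ is at most $n_r(n_r-1)$.
\item \emph{Order bound.} I would apply the argument of \cref{prop:benchmarks} to the point of the copy that is not an endpoint of the $\Delta$-pair (or to the middle point, as appropriate). Choosing one other point of the copy determines the whole copy, and the copy is mixed only if that other point lies in a class different from the third point's class. This bounds the number of mixed copies through a given $u\in A_s$ by roughly $\min$ of the numbers of points of $A\setminus A_s$ below and above $u$, with one such bound for each sign of $d$.
\end{itemize}
Summing the order bound over the classes costs an additive error. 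Working it out, each of the $O(q^2)$ ordered pairs of classes contributes an error of order $q\,(n-1)/4$, and this is how the term $\frac{q^2(q-1)}{4}(n-1)$ arises.

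Combining the pieces gives an inequality of the form
\[
M_P(A)\le \sum_r \gamma\,(n_r^2-n_r)+F(n_0,\dots,n_{q-1})+\text{error},
\]
where $F$ is the minimum (or a convex combination) of the pair-type and order-type bounds on mixed copies. The induction hypothesis $\gamma=13/28$ must then reproduce itself. I would take a convex combination $\lambda\cdot(\text{pair bound})+(1-\lambda)\cdot(\text{order bound})$ and maximize over the class-size proportions $\alpha_r=n_r/n$. The extreme cases are:
\begin{itemize}
\item all mass in one class, where the internal term dominates and induction closes because the mixed count vanishes;
\item the mass spread evenly, where the pair bound is tiny;
\item the mass concentrated on two classes, which should be the critical case.
\end{itemize}
Choosing $\lambda$ to balance the two-class configuration should produce the constant $13/28$.

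The main obstacle is this final optimization. The order bound over a two-class split must be sharp enough, and the choice of which point to anchor (the point outside the $\Delta$-pair versus the middle point) may need to depend on whether $\Delta=a$, $b$, or $b-a$. I would first treat the case $\Delta=b$, where the pair consists of the outer points and the third point is the middle one, so that the order bound applies directly. I would then reduce the other two cases to it via the reflection $P\mapsto P^\vee$ of \eqref{eq:reflection-relations}, checking that the resulting quadratic program in the $\alpha_r$ still has maximum at most $13/28$.
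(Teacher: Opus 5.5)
There is a genuine gap. Your scheme splits once modulo $q$, bounds the internal copies by the induction hypothesis, and bounds the mixed copies by a function $F$ of the class sizes; this cannot close at $13/28$. Write $D=n^2-\sum_r n_r^2$ and let $k$ be the number of nonempty classes. The induction step then requires
\[
 \#\{\text{mixed copies}\}\le\frac{13}{28}D+\frac{q^2(q-1)}4(k-1)
\]
for every set with the given class sizes, and this is false. Take $P=\{0,1,3\}$, $q=3$, and $A=\{0\}\cup\{(-2)^k:0\le k\le L\}$. Every $(-2)^k$ is $\equiv1\pmod 3$, and $\bigl((-2)^k,0,(-2)^{k+1}\bigr)$ is a copy with $d=-(-2)^k$. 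So there are $L$ mixed copies while $D=2L+2$, and the inequality fails for $L\ge77$. No fixed additive constant per split can repair this.

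Your two tools are also too weak at leading order. For three equal interleaved classes (e.g.\ $A=[n]$), the pair bound gives $\sum_r n_r(n_r-1)\approx n^2/3=\frac12D$, so it is not small when $q=3$. The order bound gives $2\cdot\frac23\cdot\frac{n^2}4=\frac12D$. Every convex combination therefore stays at ratio $\frac12$, which is just the trivial bound of \cref{prop:benchmarks}.

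Finally, the reflection step does not work. Reflection fixes $b$ and swaps $a$ with $b-a$, so it cannot move $\Delta\in\{a,b-a\}$ to $\Delta=b$. For $\{0,1,4\}$ the only admissible choice is $\Delta=b-a=3$, and then the isolated point is an endpoint of the copy.

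The paper's proof supplies three ingredients your plan lacks.
\begin{enumerate}
\item \emph{Look one level deeper.} A copy with $d=q^tu$, $q\nmid u$, is assigned to the node where the third point leaves the class of the $\Delta$-pair. There, the two points of the $\Delta$-pair lie in \emph{different} grandchildren. Their labels $r,s$ are tied to the child labels $i,j$ by $j-i\equiv\lambda(s-r)\pmod q$.
\item \emph{Use the Lev--Pinchasi bound.} For each of the $q^2(q-1)$ prescribed triples of classes, the count is at most $\Phi+\frac14$; this, not the order bound, is the source of the additive term. A concavity argument with $\psi$ then gives $\frac67a_ia_j+\frac1{28}\bigl(P_h(\mathbf b_i)+P_{-h}(\mathbf b_j)\bigr)$ for each unordered pair of children. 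Hence $C_v\le\frac37D_v+\frac1{28}E_v+\frac{q^2(q-1)}4$.
\item \emph{Charge the excess to the next level.} The $\frac1{28}E_v$ part is paid by pairs that separate one level down. Since $\sum_vE_v\le\sum_vD_v=n^2-n$, the total coefficient is $\frac37+\frac1{28}=\frac{13}{28}$.
\end{enumerate}
In inductive language, this is the stronger hypothesis $M_P(A)+\frac1{28}D_{\mathrm{root}}(A)\le\frac{13}{28}(n^2-n)+\frac{q^2(q-1)}4(n-1)$. That strengthening is exactly what absorbs the example above: the large class there splits evenly modulo $9$, so its own root separation count is about $\frac23L^2$.
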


Among patterns in coprime form, $\{0,1,2\}$ is therefore the only one with
$\gamma_P^+=1/4$, and it is also the only one with $\gamma_P=1/2$.  For
every other such pattern,
\begin{align}
 \max\left\{\frac1{2b},\frac1{24}\right\}
 &\le \gamma_P^+\le\frac{99}{400},
 \label{eq:general-new-window-plus}\\
 \max\left\{\frac1b,\frac1{12}\right\}
 &\le \gamma_P\le\frac{13}{28}.
 \label{eq:general-new-window}
\end{align}

\subsection*{The pattern $\{0,1,3\}$}
Put
\[
 P_0=\{0,1,3\},
 \qquad
 M^+(A)=M_{P_0}^+(A),
 \qquad
 M(A)=M_{P_0}(A).
\]
Then
\begin{equation}\label{eq:main-equation-intro}
 M(A)=\#\{(x,y,z)\in A^3:2x+z=3y,\ \text{$x,y,z$ not all equal}\}.
\end{equation}
After permuting the variables, this is the equation $x+2y=3z$ considered by
Aaronson.  Ganguly asked for the largest possible number of its solutions;
Aaronson highlighted it as an unresolved case \cite{Aaronson}.  Green
records the equivalent affine-copy question as Problem~24 and gives the
expected coefficient $1/3$ when both signs of $d$ are counted \cite{Green}.

For an interval,
\begin{align}
 M^+([n])
 &=\sum_{1\le d\le(n-1)/3}(n-3d)
 =\frac16n^2+O(n),
 \label{eq:interval-count-plus}\\
 M([n])
 &=2\sum_{1\le d\le(n-1)/3}(n-3d)
 =\frac13n^2+O(n).
 \label{eq:interval-count}
\end{align}

The construction in \cite{AFMKK} raises the positive-dilation lower
coefficient to $3/16$.  Combining this with
\cref{thm:positive-general} gives
\begin{equation}\label{eq:013-window-plus}
 \frac3{16}\le\gamma_{P_0}^+\le\frac{99}{400}=0.2475.
\end{equation}

When both signs of $d$ are allowed, we prove the following sharper finite
bound.
\begin{theorem}[Both signs of the dilation for $\{0,1,3\}$]
\label{thm:47-122}
If $A\subset\Z$ is finite and nonempty and $|A|=n$, then
\begin{equation}\label{eq:47-122-bound}
 M(A)
 \le \frac{47}{122}(n^2-n)+\frac92(n-1).
\end{equation}
\end{theorem}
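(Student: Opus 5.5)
We prove \cref{thm:47-122} by a weighted double-counting argument. It combines the order bound of \cref{prop:benchmarks} with an arithmetic obstruction modulo $3$, in the spirit of the choice $q=3$ in \cref{thm:both-signs-general}; there the relevant difference is $b=3$, and $q^2(q-1)/4=9\cdot 2/4=9/2$ matches the linear term in \eqref{eq:47-122-bound}.

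Write $A=\{u_1<\cdots<u_n\}$. By \eqref{eq:reflection-relations}, $M(A)=M_{P_0}^+(A)+M_{\{0,2,3\}}^+(A)$. A copy with $d>0$ of $\{0,1,3\}$ has middle point $x+d$, which lies closer to the lower end. A copy of $\{0,2,3\}$ has middle point $x+2d$, which lies closer to the upper end.

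\textbf{Step 1: charging each copy to an ordered pair.} Every copy is determined by any two of its points. Charge each $\{0,1,3\}$-copy to its (middle, lower) pair and each $\{0,2,3\}$-copy to its (middle, upper) pair. Each ordered pair $(u_i,u_j)$ then receives at most one copy of each type, which already recovers the trivial bound of order $n^2/2$. To gain, we exploit the fact that the outer points of any copy are congruent modulo $3$: $x\equiv x+3d \pmod 3$. Partition $A$ by residues mod $3$, say $A=A_0\cup A_1\cup A_2$ with $|A_r|=\alpha_r n$. Charging each copy to its \emph{outer} pair, the number of copies is at most the number of pairs of elements in the same class, $\sum_r \binom{\alpha_r n}{2}$, counted with multiplicity $2$ for the two orientations.

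\textbf{Step 2: combining the bounds.} The order bound is poor when the classes are balanced, while the residue bound is poor when one class dominates. We therefore combine them with an interpolation weight. Write $M=\sum_{\text{copies}}1$ and split the unit weight of each copy as $\lambda$ times its outer-pair charge plus $(1-\lambda)$ times its middle-point charge. For a fixed middle point $u_i$, the middle-point charge is bounded by $\min\{i-1,n-i\}$, refined to count only partners in the matching residue class.

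There is also a refinement by iterating to the lattice $3\mathbb Z$. Within a single class $A_r$, the set $(A_r-r)/3$ is again a set of integers. Copies whose outer points lie in $A_r$ have middle point $x+d$ anywhere in $A$, so the structure is not fully self-similar. Any recursion will therefore pass through quantities such as $\sum_r \alpha_r^2$ together with a cross-term bound on middle points lying in other classes. This yields a finite-dimensional optimization over $(\alpha_0,\alpha_1,\alpha_2)$ and $\lambda$, whose optimum we expect to be $47/122$ after solving a small linear program. The linear error $\tfrac92(n-1)$ absorbs the diagonal and boundary terms, namely the $-n$ corrections in $\binom{m}{2}$ and the floors in the order bound.

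\textbf{Main obstacle.} The hardest step is designing the charging scheme so that the two bounds can be combined pointwise rather than merely taking their minimum. Taking the minimum gives at best something like $\max_\alpha\min\{1/2,\sum\alpha_r^2\}$, which is $1/2$ when one class dominates. So the order bound must be applied inside each residue class, and the cross-class copies must be controlled separately. I would first try to set up a linear program: one variable per type of copy, classified by which residue class contains the middle point relative to the outer pair, with constraints from the per-point order bound and the per-class pair counts. I would then find the dual certificate with value $47/122$ and verify it by a direct inequality in the $\alpha_r$.
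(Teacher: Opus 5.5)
\textbf{There is a genuine gap: the proposal never proves an inequality beyond the trivial ones.} Step~2 stops at an optimum you only \emph{expect} to equal $47/122$ ``after solving a small linear program.'' No program, constraints, or dual certificate is given, so the constant is taken from the statement rather than derived.

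The ingredients are also not the right ones. Your self-similarity worry goes away once each copy is assigned to the first level of the mod-$3$ splitting tree at which its points separate. If $y$ lies in the class of $x,z$, then $3\mid d$ and the copy rescales to $(A_r-r)/3$. Otherwise $3\nmid d$, so $x,z$ lie in different classes mod $9$, and the class of $y$ is forced by $j\equiv i+s-r\pmod 3$. Hence the cross copies depend on the mod-$9$ distribution inside each class, not only on $(\alpha_0,\alpha_1,\alpha_2)$. The paper bounds them with the three-set estimate of \cref{lem:three-set}, giving $C_v\le N_v^2\sum_ip_i^2\,[J(\mathbf q_i;p_j/p_i)+J(\mathbf q_i;p_k/p_i)]+\frac92$. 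A per-point order bound does not capture the quadratic regime of $\Phi$. The term $\frac92(n-1)$ comes from $18$ applications of the $+\frac14$ in that lemma at each of at most $n-1$ branching nodes, not from floors or diagonal corrections. Also, charging to (middle, lower) and (middle, upper) pairs gives $n^2-n$, not $n^2/2$.

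\textbf{The decisive missing idea is how information passes between levels.} A fixed split $\lambda$ of each copy between its outer pair (which separates at a child node) and a cross pair (which separates at the node itself) produces node inequalities $C_v\le\alpha D_v+\mu E_v$. Here $\alpha,\mu\ge0$ are constants and the final constant is $\alpha+\mu$. This is exactly the mechanism of \cref{thm:both-signs-general}.

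With the three-set estimate as input, this cannot reach $47/122$. Take $\mathbf p=\frac1{61}(42,12,7)$ with equidistributed grandchildren. There the estimate equals $\frac{4687}{10584}D_v\approx0.443\,D_v$, while $E_v=\frac23N_v^2\sum_ip_i^2<D_v$. So $\alpha D_v+\mu E_v\le(\alpha+\mu)D_v$, and absorbing the estimate forces $\alpha+\mu\ge\frac{4687}{10584}>\frac{47}{122}$.

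The paper instead uses the thresholded potential $V(\mathbf q)=(c-\sum_iq_i^2)_+$. A node borrows $(N_vp_i)^2V(\mathbf q_i)$ from each balanced child, but pays $N_v^2V(\mathbf p)$ only when its own split is balanced. An unbalanced node like the one above therefore borrows for free, and the corrections telescope over the tree. Making this work requires three further pieces:
\begin{itemize}
\item the uneven-split loss $\beta(w)\min\{\delta,\eta\}$ of \cref{lem:uneven-three-split};
\item \cref{lem:two-target-sizes};
\item the one-split inequality \cref{lem:one-three-way-split}, whose sum-of-squares identity \eqref{eq:critical-sos} is tight exactly at $(42,12,7)/61$.
\end{itemize}
That tightness is what fixes $c=47/122$. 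Without some nonlinear two-level bookkeeping of this kind, the linear program you describe cannot certify the stated constant.
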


Thus
\begin{equation}\label{eq:013-window}
 \frac13\le\gamma_{P_0}\le\frac{47}{122}=0.385245\ldots.
\end{equation}

\subsection*{Proof outline}
The proofs use residue classes in two different ways.  For positive $d$, the
basic order argument counts possible pairs around the middle point.  If one
gap of the pattern is divisible by $q$, two points of every copy must be in
the same residue class modulo $q$.  When the set is spread among several
classes, many of the pairs counted by the basic argument cannot occur in a
copy.  When most elements lie in one class, we divide that class by $q$ and
apply the argument again to a smaller set.  The remaining copies, which use
more than one residue class, are bounded directly.

When both signs of $d$ are allowed, we repeatedly split the set into residue
classes modulo $q,q^2,q^3,\ldots$.  Every nonconstant copy has a first level
at which its three points no longer lie in the same class.  We count the copy
at that level.  The quantities that arise are simply counts of ordered pairs
that separate at one level or the next, and each ordered pair is counted only
once when all levels are added.

For $\{0,1,3\}$, the same repeated splitting is carried out modulo $3$, but
we retain the three individual class sizes instead of replacing them by a
single total.  An auxiliary nonnegative correction term keeps track of how the distribution
at one level constrains the next.  The correction terms at consecutive
levels cancel when the inequalities are summed.

The next section proves two elementary residue-class estimates and recalls
the Lev-Pinchasi three-set bound used in the later arguments. 
The later sections treat positive $d$, both signs of $d$, and
finally the sharper modulo-$3$ argument for $\{0,1,3\}$.
\section{Three Auxiliary Estimates}\label{sec:common}
\subsection{How residue classes improve the basic order bound}
Suppose that
\[
 P=\{0,a,a+g\},
\]
and choose $q\ge2$ such that $q\mid g$ and $\gcd(a,q)=1$.  For a finite
set $S\subset\Z$, let
\begin{equation}\label{eq:def-nu-q}
 \nu_q(S)=|S|-\max_{r\bmod q}|S\cap(r+q\Z)|.
\end{equation}
Thus $\nu_q(S)$ is the number of elements outside the largest residue class
modulo $q$.  Define
\begin{equation}\label{eq:def-G}
 G(s)=
 \begin{cases}
 s^2/4,&0\le s\le1/2,\\[1mm]
 \bigl(1+(2s-1)^2\bigr)/16,&1/2\le s\le1.
 \end{cases}
\end{equation}
\begin{lemma}[Improvement when the set uses several residue classes]
\label{lem:residue-spread}
Under these assumptions, every nonempty finite $S\subset\Z$ of size $N$
satisfies
\begin{equation}\label{eq:residue-spread}
 M_P^+(S)
 \le \frac14N^2-G\left(\frac{\nu_q(S)}N\right)N^2+1.
\end{equation}
\end{lemma}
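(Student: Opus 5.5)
The plan is to refine the proof of \cref{prop:benchmarks} using the residue structure of the two upper points. Write $S=\{u_1<\cdots<u_N\}$. In a copy $x,x+ad,x+(a+g)d$ with $d>0$, the middle point $x+ad$ and the top point $x+(a+g)d$ differ by $gd$, which is divisible by $q$. So the top point lies in the same residue class modulo $q$ as the middle point. As in \cref{prop:benchmarks}, a copy with middle point $u_i$ is determined by its lower point or by its upper point. Let $c_i$ be the number of elements of $S$ above $u_i$ in the class of $u_i$. Then
\[
 M_P^+(S)\le\sum_{i=1}^N\min\{\,i-1,\;n-i,\;c_i\,\}=:\sum_i\min\{m_i,c_i\},
\]
where $m_i=\min\{i-1,N-i\}$ and I used $c_i\le N-i$. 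The hypothesis $\gcd(a,q)=1$ is not needed for this bound; it matters only when the lemma is applied later. The task is now purely combinatorial: maximize this sum given the residue class sizes.

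Next I remove the dependence on positions using a rearrangement inequality. The function $\min(x,y)$ has the Monge (supermodular) property. Hence for two multisets of nonnegative numbers, the sum of pairwise minima over any bijection is at most the sum obtained by pairing them in the same sorted order. Let $n_1\ge n_2\ge\cdots$ be the class sizes, so $n_1=N-\nu$ with $\nu=\nu_q(S)$.
\begin{itemize}
\item The multiset $\{c_i\}$ is the union over classes $r$ of $\{0,1,\ldots,n_r-1\}$. Its $j$-th largest element $b_j$ satisfies $\#\{j:b_j\ge t\}=\sum_r\pos{n_r-t}$.
\item The multiset $\{m_i\}$ has $j$-th largest element about $(N-j)/2$.
\end{itemize}
Therefore
\[
M_P^+(S)\le\sum_j\min\{(N-j)/2,\;b_j\}+O(1).
\]
I expect this discrete sum to be comparable with the corresponding integral up to an error of at most $1$. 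Alternatively one can use exact floor counts to get the $+1$. Without any class restriction, i.e.\ with $b_j=N-j$, this reproduces the bound $N^2/4$.

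The remaining step is the extremal problem. The function $t\mapsto\sum_r\pos{n_r-t}$ is convex and piecewise linear. Given $n_1=N-\nu$, I would show that the sum is maximized by making the other classes as large as possible. This can be done by merging: merging two small classes, subject to staying $\le n_1$, only increases every $b_j$.
\begin{itemize}
\item If $s=\nu/N\le1/2$, the extremal configuration is two classes of sizes $N-\nu$ and $\nu$. Computing $\int\min\{(N-j)/2,b_j\}\,dj$ should give exactly $N^2/4-\nu^2/4$. The loss is $G(s)N^2$ because the top $\nu$ values of $b$ are duplicated rather than spread out.
\item If $s>1/2$, the non-maximal classes must be cut at size $N-\nu$. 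This forces many classes of size $n_1$.
\end{itemize}
The resulting piecewise integral should produce $\bigl(1+(2s-1)^2\bigr)/16$. Note also that the first formula gives $1/16$ at $s=1/2$, which matches the second branch there.

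I expect this last optimization to be the main obstacle, in particular for $s>1/2$. The dominating configuration is then less obvious, since $(N-\nu)\nmid\nu$ in general, so fractional class counts and the crossing point of $b_j$ with $(N-j)/2$ must be handled. I would first relax to a continuous problem in which the class sizes are real numbers bounded by $1-s$. Then I would verify that equal classes of size $1-s$ together with one remainder class are optimal, and compute the crossing explicitly.
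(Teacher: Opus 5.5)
\textbf{The rearrangement step fails.} Your opening inequality $M_P^+(S)\le\sum_i\min\{i-1,N-i,c_i\}$ is correct. It is even at least as strong as the pair count in the paper. Write $m=N/2$, $L,U$ for the lower and upper halves, and $D_U$ for the number of cross-residue pairs in $U$. Summed over $U$, your bound is $\binom m2-D_U$, and on $L$ it is elementwise no larger than the paper's bound.

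The trouble is that sorting $\{m_i\}$ and $\{c_i\}$ independently forgets that $i$ and $c_i$ come from the same ordering of $S$. That coupling is the only source of the loss. Take two classes of sizes $N-\nu$ and $\nu$ with $\nu\le N/2$, which is the configuration your merging argument singles out. For every integer $t\ge1$,
\[
 \#\{j:b_j\ge t\}=\pos{N-\nu-t}+\pos{\nu-t}\ \ge\ \pos{N-2t}=\#\{i:m_i\ge t\}.
\]
So the sorted pairing equals $\sum_{t\ge1}\pos{N-2t}=\lfloor (N-1)^2/4\rfloor$, the trivial bound of \cref{prop:benchmarks}, with no loss at all.

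The values duplicated in $\{c_i\}$ are the bottom ones $0,\dots,\nu-1$, not the top ones. The duplicated stretch of $b_j$ decreases at rate $1/2$, exactly like $(N-j)/2$, so the predicted $N^2/4-\nu^2/4$ never appears. Hence your relaxed problem gives nothing on the whole range $s\le1/2$, which is exactly where the lemma is needed: the proof of \cref{thm:positive-general} relies on $G(s)=s^2/4$ for $1/10\le s\le1/2$. For $1/2<s\le2/3$, your extremal configuration (two classes of size $N-\nu$ plus a remainder) gives about $\bigl(\tfrac14-\tfrac w2+\tfrac34w^2\bigr)N^2$ with $w=2s-1$. This still exceeds $\bigl(\tfrac14-G(s)\bigr)N^2$ until $s\approx0.59$.

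\textbf{What is missing is an argument that keeps track of positions.} The actual loss has two sources. First, the lowest elements of $S$ have small $i-1$ but large $c_i$, because they are also low in their own class. Second, upper elements outside the dominant class cannot pair with the dominant-class elements above them.

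The paper captures both effects as follows.
\begin{itemize}
\item It takes $r_0$ largest in $U$, and lets $\sigma$ and $\ell$ count the elements of $U$ and of $L$ outside $r_0$.
\item It bounds $D_U\ge\sigma(m-\sigma)$, or $D_U\ge m\sigma/2$ when $\sigma\ge m/2$.
\item It uses that the $i$-th element of $L$ outside $r_0$ has at most $(\ell-i)+\sigma$ possible partners above it. This gives $D_L\ge\lfloor\pos{\ell-\sigma}^2/4\rfloor$ impossible lower pairs.
\item It minimizes $D_U+D_L$ over $\sigma$ subject to $\sigma+\ell\ge\nu_q(S)$, which produces $G$.
\end{itemize}

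To keep your formulation instead, you would have to maximize $\sum_i\min\{i-1,N-i,c_i\}$ over residue labelings of the ordered set $S$. Here $c_i$ is the number of later positions with the same label. You cannot maximize over independent rearrangements of the two multisets.
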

\begin{proof}
First suppose that $N=2m$.  Let $L$ be the lower half of $S$ and $U$ the
upper half, so $|L|=|U|=m$.  A copy with $d>0$ has the form $x<y<z$.
If $y\in L$, associate the copy with the pair $(x,y)$; if $y\in U$,
associate it with $(y,z)$.  Each such pair determines at most one copy.
There are
\[
 2\binom m2=m(m-1)
\]
pairs available before the congruence condition is used.  We now count
pairs that cannot belong to a copy.

Choose a residue $r_0\pmod q$ that occurs most often in $U$, and set
\[
 \sigma=|U\setminus(r_0+q\Z)|,
 \qquad
 \ell=|L\setminus(r_0+q\Z)|,
 \qquad
 t=\sigma+\ell.
\]
Consider first the pairs $(y,z)$ with $y,z\in U$.  In every copy,
\[
 z-y=gd
\]
is divisible by $q$, so $y$ and $z$ must have the same residue modulo $q$.
Let $D_U$ be the number of pairs in $U$ whose two residues are different.
If $u_r$ is the number of elements of $U$ in residue class $r$, then
\[
 D_U=\frac12\left(m^2-\sum_r u_r^2\right).
\]
The class $r_0$ has size $m-\sigma$ and is at least as large as every other
class.  It follows that
\begin{equation}\label{eq:DU-piecewise}
 D_U\ge
 \begin{cases}
 \sigma(m-\sigma),&0\le\sigma\le m/2,\\[1mm]
 m\sigma/2,&m/2\le\sigma\le m.
 \end{cases}
\end{equation}
For the first line, the smallest possible number of unequal-residue pairs is
obtained by placing all $\sigma$ remaining elements in one other class.  For
the second line, every class has size at most $m-\sigma$, so
$\sum_r u_r^2\le(m-\sigma)m$.

We next count pairs $(x,y)$ with $x,y\in L$ that cannot be completed to a
copy.  List the elements of $L$ outside residue class $r_0$ as
$y_1<\cdots<y_\ell$, and let $j_i$ be the position of $y_i$ in the ordered
set $L$.  There are $j_i-1$ possible choices of $x<y_i$.  Once $x$ and
$y_i$ are chosen, the third point $z$ is forced.  Because $z-y_i=gd$, this
forced point must have the same residue as $y_i$.  Different choices of $x$
force different values of $z$.
Above $y_i$, there are at most $\ell-i$ elements of $L$ outside $r_0$ and
at most $\sigma$ such elements in $U$.  Hence at least
$j_i-1-(\ell-i+\sigma)$ of the earlier choices of $x$ cannot be completed,
whenever this number is positive.  If $D_L$ denotes the total number of
such impossible lower pairs, then $j_i\ge i$ gives
\begin{equation}\label{eq:DL-rank}
 \begin{aligned}
 D_L
 &\ge\sum_{i=1}^{\ell}
       \bigl(j_i-1-(\ell-i+\sigma)\bigr)_+\\
 &\ge\sum_{i=1}^{\ell}(2i-\ell-\sigma-1)_+
  =\left\lfloor\frac{(\ell-\sigma)_+^2}{4}\right\rfloor.
 \end{aligned}
\end{equation}
Here $(x)_+=\max\{x,0\}$.

It remains to express the two losses in terms of
$t=\sigma+\ell$, the total number of elements outside $r_0$.  Let
$f(\sigma)$ denote the right side of \eqref{eq:DU-piecewise}.  Since
$\ell=t-\sigma$, equations \eqref{eq:DU-piecewise} and
\eqref{eq:DL-rank} imply
\[
 D_U+D_L\ge f(\sigma)+\frac{(t-2\sigma)_+^2}{4}-1.
\]
We check the minimum of the right side.
If $t\le m$ and $\sigma\le t/2$, then $\sigma\le m/2$ and
\[
 f(\sigma)+\frac{(t-2\sigma)^2}{4}
 =\frac{t^2}{4}+\sigma(m-t)\ge\frac{t^2}{4}.
\]
If $t\le m$ and $t/2\le\sigma\le m/2$, then
\[
 f(\sigma)=\sigma(m-\sigma)
 \ge \frac t2\left(m-\frac t2\right)
 \ge\frac{t^2}{4}.
\]
If $\sigma\ge m/2$, then $f(\sigma)\ge m^2/4\ge t^2/4$.
Now write $t=m+u$ with $0\le u\le m$.  Since $\ell\le m$, we have
$\sigma\ge u$.  If $\sigma\le m/2$, then
\[
 f(\sigma)+\frac{(t-2\sigma)^2}{4}
 =\frac{t^2}{4}-u\sigma
 \ge\frac{m^2+u^2}{4}.
\]
If $m/2\le\sigma\le t/2$, subtracting $(m^2+u^2)/4$ leaves
\[
 \frac{(2\sigma-m)(\sigma-u)}2\ge0.
\]
Finally, if $\sigma\ge t/2$, then
\[
 f(\sigma)\ge\frac{mt}{4}
 \ge\frac{m^2+u^2}{4}.
\]
Therefore
\begin{equation}\label{eq:defect-t}
 D_U+D_L\ge
 \begin{cases}
 t^2/4-1,&0\le t\le m,\\[1mm]
 \bigl(m^2+(t-m)^2\bigr)/4-1,&m\le t\le2m.
 \end{cases}
\end{equation}
The residue class $r_0$ was chosen to be largest in $U$, not necessarily in
all of $S$.  Consequently $t\ge\nu_q(S)$.  The right side of
\eqref{eq:defect-t} increases with $t$, and at $t=\nu_q(S)$ it equals
$N^2G(\nu_q(S)/N)-1$.  Subtracting the impossible pairs from the
$m(m-1)$ available pairs gives
\[
 M_P^+(S)
 \le \frac14N^2-\frac N2
      -N^2G\left(\frac{\nu_q(S)}N\right)+1,
\]
which is stronger than the claimed estimate.

If $N$ is odd, add one integer $z$ in a globally largest residue class
modulo $q$, chosen far enough away that no copy involving $z$ is created.
Only finitely many values of $z$ are forbidden, so this is possible.  The
enlarged set has size $N+1$, the same value of $\nu_q$, and the same copy
count.  At a fixed integer $\nu$, the function $X^2G(\nu/X)$ does not
decrease as $X$ increases: it is constant for $X\ge2\nu$, while for
$\nu\le X\le2\nu$ it equals
\[
 \frac{X^2+(2\nu-X)^2}{16}.
\]
Apply the stronger even estimate above to the enlarged set.  Its first two
terms simplify as
\[
 \frac14(N+1)^2-\frac{N+1}{2}=\frac14N^2-\frac14.
\]
Together with the monotonicity just proved, this gives
\eqref{eq:residue-spread}.
\end{proof}

\subsection{Residue classes whose labels differ by a fixed amount}
Let $q=p^k$ be a prime power, where $p$ is prime and $k\ge1$, and let
$x_r\ge0$ be a weight attached to
each residue $r\pmod q$.  Write
\[
 |\mathbf x|=\sum_r x_r,
 \qquad
 \nu(\mathbf x)=|\mathbf x|-\max_r x_r.
\]
Thus $\nu(\mathbf x)$ is the total weight outside the largest class.
For $h\ne0\pmod q$, define
\begin{equation}\label{eq:def-fixed-difference-matching}
 S_h(\mathbf x)=\sum_r\min\{x_r,x_{r+h}\}.
\end{equation}
This quantity bounds how many objects can be matched between classes whose
residues differ by $h$.  Also let $E_h(\mathbf x)$ be the sum of $x_rx_s$
over each unordered pair of residue classes $\{r,s\}$ with $s=r+h$ or
$r=s+h$, counting each such pair once.  Set
\begin{equation}\label{eq:def-kappa-p}
 \kappa_p=
 \begin{cases}
 1/3,&p=3,\\
 1/4,&p\ne3.
 \end{cases}
\end{equation}
\begin{lemma}[Two bounds when residue labels differ by a fixed amount]
\label{lem:fixed-difference}
For every nonzero $h\pmod q$,
\begin{align}
 S_h(\mathbf x)
 &\le \min\left\{|\mathbf x|,\frac{p}{p-1}\nu(\mathbf x)\right\},
 \label{eq:fixed-difference-matching}\\
 E_h(\mathbf x)&\le\kappa_p|\mathbf x|^2.
 \label{eq:fixed-difference-pairs}
\end{align}
\end{lemma}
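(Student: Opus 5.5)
The plan is to decompose the residues into orbits of the shift $r\mapsto r+h$ and bound both quantities cycle by cycle. Write $h=p^j u$ with $0\le j<k$ and $p\nmid u$. The orbits of $r\mapsto r+h$ on $\Z/q\Z$ are then cycles of the common length $L=p^{k-j}\ge p$. Both $S_h$ and $E_h$ split as sums over these cycles: $S_h=\sum_C S_C$ and $E_h=\sum_C E_C$. For a cycle $C$, write $T_C=\sum_{r\in C}x_r$. When $L=2$ the cycle has a single edge, counted once in $E_h$ and twice in $S_h$.

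\emph{The matching bound.} The bound $S_h\le|\mathbf x|$ is immediate from orienting each edge $r\to r+h$, since $\min\{x_r,x_{r+h}\}\le x_r$. For the second bound, let $C_0$ be the cycle containing a globally largest class $M$, with $x_M=\max_r x_r$. For every other cycle, use $S_C\le T_C$. On $C_0$, delete $M$; the remaining vertices form a path $v_1,\dots,v_{L-1}$, and the $L$ edges of $C_0$ are $(M,v_1),(v_1,v_2),\dots,(v_{L-1},M)$. Each edge min is at most either endpoint, so we may charge every edge to an endpoint other than $M$. This charges each $v_i$ once, except that one vertex $v_j$ of our choosing is charged twice. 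Taking $v_j$ to be the smallest, we get
\[
S_{C_0}\le (T_{C_0}-x_M)+\min_j x_{v_j}\le \frac{L}{L-1}(T_{C_0}-x_M)\le\frac p{p-1}(T_{C_0}-x_M).
\]
The last step uses $L\ge p$. If $L=2$ the path is a single vertex, and the bound $2(T_{C_0}-x_M)$ holds directly. Summing over cycles,
\[
S_h\le\sum_{C\ne C_0}T_C+\frac p{p-1}(T_{C_0}-x_M)\le\frac p{p-1}\nu(\mathbf x),
\]
since $\nu(\mathbf x)=\sum_{C\ne C_0}T_C+(T_{C_0}-x_M)$.

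The main point of care in this part is the global maximum. Bounding every cycle by its own maximum would produce $|\mathbf x|-\sum_C\max_C$, which is smaller than $\nu(\mathbf x)$ and so is the wrong direction. Reserving the improved bound for the single cycle $C_0$ avoids this.

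\emph{The pair bound.} Each $E_C$ is the Lagrangian-type form $\sum x_rx_s$ over the edges of a cycle graph of length $L$. For $L=2$ it is $x_rx_s\le T_C^2/4$. For $L=3$ it is the elementary symmetric sum $e_2\le T_C^2/3$. For $L\ge4$ the cycle is triangle-free, so the Motzkin--Straus theorem (clique number $2$) gives $E_C\le\frac12(1-\frac12)T_C^2=T_C^2/4$. Alternatively, one can shift weight along a path, as in Mantel's argument, until the support is a single edge. Since $L$ is a power of $p$, the case $L=3$ occurs only when $p=3$. Hence $E_C\le\kappa_pT_C^2$ in all cases, and therefore $E_h=\sum_C E_C\le\kappa_p\sum_C T_C^2\le\kappa_p|\mathbf x|^2$.
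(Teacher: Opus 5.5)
Your proof is correct and follows essentially the paper's route. Like the paper, you use cycles of length $L\ge p$, apply the improved $\frac{L}{L-1}$ bound only on the cycle through a global maximum, use $S_C\le T_C$ elsewhere, and bound $E_h$ by the clique-number (Motzkin--Straus) fact; your one-doubly-charged-vertex scheme with min $\le$ average is a discrete form of the paper's weights $\frac{i}{L-1},\frac{L-1-i}{L-1}$, and you apply Motzkin--Straus per cycle rather than to the whole union of cycles. Your cautionary aside is backwards but harmless: since $\sum_C\max_{r\in C}x_r\ge\max_r x_r$, the quantity $|\mathbf x|-\sum_C\max_{r\in C}x_r$ is at most $\nu(\mathbf x)$, so bounding every cycle by its own maximum would give a valid and even stronger upper bound.
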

\begin{proof}
Repeatedly adding $h$ partitions the residue classes into cycles.  Let $L$ be the length of one of these cycles; equivalently, $L$ is the
smallest positive integer for which $Lh\equiv0\pmod q$.  Because $q$ is a
power of $p$, the number $L$ is also a power of $p$, and $L\ge p$.
Choose a cycle containing a largest weight and label its weights
$x_0,x_1,\ldots,x_{L-1}$, with $x_0$ largest.  The two terms involving
$x_0$ in $S_h$ are exactly $x_1+x_{L-1}$.  For $1\le i\le L-2$,
\[
 \min\{x_i,x_{i+1}\}
 \le \frac{i}{L-1}x_i+\frac{L-1-i}{L-1}x_{i+1}.
\]
After summing, every weight in this cycle other than $x_0$ has coefficient
$L/(L-1)$.  On every other cycle, the sum of the minima is at most the total
weight of that cycle.  Hence
\[
 S_h(\mathbf x)
 \le\frac{L}{L-1}\nu(\mathbf x)
 \le\frac{p}{p-1}\nu(\mathbf x).
\]
The simpler bound $S_h(\mathbf x)\le|\mathbf x|$ follows by summing
$\min\{x_r,x_{r+h}\}\le x_r$.

For the second estimate, we use the following elementary weighted-graph
fact.  If a graph never contains more than $\omega$ vertices that are all
joined to one another, then
\[
 \sum_{uv\in E}x_ux_v
 \le\frac{\omega-1}{2\omega}\left(\sum_vx_v\right)^2.
\]
To see this, if two vertices with positive weights are not joined, move their
combined weight to the one with the larger weighted sum of neighbors.  This
does not decrease the displayed edge sum.  Repeating leaves positive weight
only on vertices that are all joined to one another, where the inequality is
immediate.

The graph formed by the residue pairs above is a disjoint union of cycles.
Three residues can be mutually joined only when a cycle is a triangle, which
can occur only for $p=3$.  Thus we may take $\omega=3$ when $p=3$ and
$\omega=2$ otherwise, giving \eqref{eq:fixed-difference-pairs}.  If a cycle has
length two, it is a single unordered pair and is counted once in
$E_h$.
\end{proof}

\subsection{A bound for solutions drawn from three sets}\label{sec:three-set}
For $u,v,w\ge0$, sort the three numbers as $\rho\le\sigma\le\tau$ and
define
\begin{equation}\label{eq:def-Phi}
 \Phi(u,v,w)=
 \begin{cases}
 \rho\sigma,&\tau\ge\rho+\sigma,\\[1mm]
 \displaystyle
 \frac{2(uv+uw+vw)-(u^2+v^2+w^2)}4,&\tau<\rho+\sigma.
 \end{cases}
\end{equation}
The following form of Lev and Pinchasi's estimate works for every choice of
nonzero coefficients; see \cite[Lemma~2]{LevPinchasi} and compare
\cite[Lemma~2.6]{Aaronson}.
\begin{lemma}[Lev--Pinchasi bound for three prescribed sets]
\label{lem:three-set}
Let $X,Y,Z\subset\Z$ be finite, let $\alpha,\beta,\gamma$ be nonzero
integers, and let $\delta\in\Z$.  Then
\begin{equation}\label{eq:three-set}
 \#\{(x,y,z)\in X\times Y\times Z:
 \alpha x+\beta y+\gamma z=\delta\}
 \le \Phi(|X|,|Y|,|Z|)+\frac14.
\end{equation}
\end{lemma}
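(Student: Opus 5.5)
We first reduce to the equation $x+y+z=0$ and then bound the number of solutions by the size of an antichain in a product of three chains.

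\textbf{Reduction.} Replace $X,Y,Z$ by $\alpha X$, $\beta Y$ and $\gamma Z-\delta$. These maps are injective because $\alpha,\beta,\gamma\neq0$, so the three sizes are unchanged. The solutions of $\alpha x+\beta y+\gamma z=\delta$ in $X\times Y\times Z$ correspond bijectively to the triples $(x',y',z')$ in the new sets with $x'+y'+z'=0$. So we may assume $\alpha=\beta=\gamma=1$ and $\delta=0$. Put $u=|X|$, $v=|Y|$, $w=|Z|$.

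\textbf{Passing to ranks.} List the three sets increasingly as $X=\{x_1<\cdots<x_u\}$, and similarly for $Y$ and $Z$. Send each solution $(x_i,y_j,z_k)$ to its rank triple $(i,j,k)\in[u]\times[v]\times[w]$. This map is injective, and its image $T$ is an antichain for the coordinatewise order. Indeed, if $(i,j,k)\le(i',j',k')$ coordinatewise with the triples distinct, then $x_i\le x_{i'}$, $y_j\le y_{j'}$ and $z_k\le z_{k'}$, with at least one strict inequality. Then $0=x_i+y_j+z_k<x_{i'}+y_{j'}+z_{k'}=0$, a contradiction. Hence the number of solutions is at most the width of the poset $[u]\times[v]\times[w]$.

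\textbf{Width of the product.} By the de Bruijn--Tengbergen--Kruyswijk theorem, a product of chains is rank-symmetric, rank-unimodal and has the Sperner property. So its width equals the largest rank level, namely
\[
 \max_s\#\{(i,j,k)\in[u]\times[v]\times[w]:i+j+k=s\}.
\]
If one prefers to avoid quoting this theorem, the same conclusion follows from an explicit symmetric chain decomposition of the product of chains. Each chain meets the middle level, so the number of chains, and hence the width, equals the middle-level size.

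\textbf{Estimating the middle level (main obstacle).} It remains to show that the largest level has size at most $\Phi(u,v,w)+\tfrac14$. Sort $u,v,w$ as $\rho\le\sigma\le\tau$.
\begin{itemize}
\item[(i)] Suppose $\tau\ge\rho+\sigma$. Fixing the two coordinates with ranges $\rho$ and $\sigma$ determines the third, so every level has at most $\rho\sigma$ elements.
\item[(ii)] Suppose $\tau<\rho+\sigma$. Write the level count as $\sum_k c(s-k)$, where $c(m)=\#\{(i,j)\in[u]\times[v]:i+j=m\}$ is the trapezoidal convolution of two intervals. Convolving this trapezoid with the indicator of an interval of length $w$ gives a piecewise-quadratic function of $s$. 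Evaluate it at the central value $s=\lfloor(u+v+w+3)/2\rfloor$; the triangle inequality $\tau<\rho+\sigma$ ensures that the window of length $w$ reaches into both sloped parts of the trapezoid. The expected result is $\bigl(2(uv+uw+vw)-u^2-v^2-w^2\bigr)/4$ up to a parity error of at most $\tfrac14$.
\end{itemize}
We expect case (ii) to be the main obstacle, because the exact parity bookkeeping is easy to get wrong. A cleaner route is to bound the level size by the number of lattice points of a hexagon, the section of the box by the plane $i+j+k=s$, via an area-plus-boundary estimate. The check would be that the discrepancy from the continuous area is at most $\tfrac14$. It is also worth confirming that the two formulas agree at $\tau=\rho+\sigma$, where the quadratic expression reduces to $\rho\sigma$.
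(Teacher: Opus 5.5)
Your reduction (multiplying $X,Y,Z$ by $\alpha,\beta,\gamma$ and translating $\gamma Z$ by $\delta$) is exactly what the paper does. The paper then simply cites Lev--Pinchasi for the homogeneous equation, whereas you set out to reprove that bound. Your rank map, the antichain argument, the appeal to de Bruijn--Tengbergen--Kruyswijk, and case (i) are all correct. The gap is case (ii). You never compute the largest level; you only state the value you ``expect'', and this computation is precisely where the constant $\Phi+\frac14$ comes from. Your suggested fallback does not close the gap either: a generic area-plus-boundary lattice-point estimate controls the discrepancy only up to a multiple of the perimeter of the hexagon. It therefore cannot give an additive error of $\frac14$, and an exact count is needed.

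Here is a short exact count that fills the gap. The level sizes are symmetric in the three coordinates, so label them with $u\le v\le w$. Case (ii) then means $w\le u+v-1$, and in particular $u\ge1$. The level $i+j+k=s$ has as many points as there are pairs $(i,j)\in[u]\times[v]$ with $i+j$ in the window $\{s-w,\ldots,s-1\}$. The sum $i+j$ takes the $u+v-1\ge w$ values $2,\ldots,u+v$. So the part of any window inside this range is contained in some window of length $w$ lying entirely inside it, and only such windows need to be checked. Such a window omits the $a$ lowest and the $b$ highest diagonals, where $a+b=m:=u+v-w-1$. Since $v\le w$, we have $a,b\le m\le u-1$, so the omitted diagonals have sizes $1,\ldots,a$ and $1,\ldots,b$. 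Hence the level has
\[
 uv-\binom{a+1}2-\binom{b+1}2
 =uv-\frac{a^2+b^2+m}2
 \le uv-\frac{m^2+2m}4
 =uv-\frac{(u+v-w)^2}4+\frac14
\]
points, and $uv-(u+v-w)^2/4$ is exactly the second line of the definition of $\Phi$. This bounds every level directly, so you need only the Sperner property of $[u]\times[v]\times[w]$, not rank-unimodality or the location of the central level. With this computation inserted, your route gives a self-contained proof of the bound that the paper only cites.
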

The estimate is usually stated for a homogeneous equation, with $\delta=0$.
Multiplying each of the three sets by a nonzero integer and translating one
set reduces the displayed equation to that form.  Multiplication by a
nonzero integer does not identify two distinct integers.

We will use two direct consequences of the definition of $\Phi$.  First,
with $(t)_+=\max\{t,0\}$,
\begin{equation}\label{eq:Phi-positive-part}
 \Phi(u,v,w)
 =uv-\frac14\pos{u+v-w}^{2}
  +\frac14\pos{u-v-w}^{2}
  +\frac14\pos{v-u-w}^{2}.
\end{equation}
Second, define
\begin{equation}\label{eq:def-psi}
 \psi(t)=1-\pos{1-t}^{2}
 =\begin{cases}2t-t^2,&0\le t\le1,\\1,&t\ge1.
 \end{cases}
\end{equation}
Then
\begin{equation}\label{eq:Phi-psi}
 \Phi(u,v,w)
 \le uv\,\psi\left(\frac{w}{u+v}\right)
 \qquad (u+v>0).
\end{equation}
If $w\ge u+v$, equality holds.  If the three numbers satisfy the triangle
inequalities, the claim reduces to $uv\le(u+v)^2/4$.  The two remaining
cases follow immediately from the first line of the definition of $\Phi$.
\section{Copies with Positive Dilation}\label{sec:positive-d}
\subsection{Splitting off the largest residue class}
After reflecting the pattern if necessary, we may write it as
\begin{equation}\label{eq:pattern-right-gap}
 P=\{0,a,a+g\},\qquad g>1.
\end{equation}
Indeed, if both adjacent gaps were $1$, the pattern would be
$\{0,1,2\}$.  Since the pattern is in coprime form, $\gcd(a,g)=1$.

Choose a prime $p\mid g$ and let
\begin{equation}\label{eq:one-sided-q}
 q=p^{v_p(g)}.
\end{equation}
Thus $a$ and $g/q$ are both coprime to $q$.
The next proposition separates the largest residue class modulo $q$ from
the rest of the set.  The notation $R+qB$ means that the elements in that
class are translated by $-R$ and divided by $q$, producing the smaller set
$B$.
\begin{proposition}[Counting after splitting off the largest residue class]
\label{prop:largest-class-split}
Let $A\subset\Z$ be finite and nonempty.  Choose a largest residue class
modulo $q$, write the part of $A$ in that class as $R+qB$, and put
$C=A\setminus(R+qB)$.  Set
\[
 u=|B|,\qquad v=|C|,\qquad
 s=\frac{\nu_q(B)}u.
\]
Then
\begin{equation}\label{eq:largest-class-split}
 M_P^+(A)
 \le M_P^+(B)+M_P^+(C)
 +uv\min\left\{\frac{p}{p-1}s,1\right\}
 +\kappa_pv^2.
\end{equation}
\end{proposition}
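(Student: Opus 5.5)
The plan is to classify each copy $x<y<z$ with $d>0$ (so $y=x+ad$, $z=y+gd$) by which of its points lie in the largest class $R+q\Z$. Since $z-y=gd$ and $q\mid g$, the points $y$ and $z$ always lie in the same residue class. This leaves four types of copies.
\begin{enumerate}
\item All three points lie in $R+q\Z$.
\item All three points lie in $C$.
\item $x\in C$ and $y,z\in R+q\Z$.
\item $x\in R+q\Z$ and $y,z\in C$.
\end{enumerate}
I will bound these four types by the four terms of \eqref{eq:largest-class-split}, in that order.

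For type (i), $ad=y-x\equiv0\pmod q$ and $\gcd(a,q)=1$, so $d=qd'$. Then $(x-R)/q,\ (y-R)/q,\ (z-R)/q$ is a copy of $P$ in $B$ with dilation $d'>0$, and this map is injective. So type (i) contributes at most $M_P^+(B)$. Type (ii) contributes at most $M_P^+(C)$ trivially.

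For type (iii), fix $x\in C$ and write $y=R+qb$, $z=R+qb'$ with $b,b'\in B$.
\begin{itemize}
\item From $ad\equiv R-x\not\equiv0\pmod q$, the residue $d\bmod q$ is a fixed nonzero value $d_0$.
\item Hence $b'-b=(g/q)d\equiv h:=(g/q)d_0\pmod q$. Since $g/q$ is coprime to $q$, we have $h\not\equiv0$.
\item For fixed $x$, the map $y\mapsto z=y+g(y-x)/a$ is strictly increasing, so $b\mapsto b'$ is injective.
\end{itemize}
Let $x_r$ be the number of elements of $B$ in class $r\pmod q$. Copies with this $x$ correspond to an injective partial matching from class $r$ of $B$ to class $r+h$. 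Class $r$ therefore contributes at most $\min\{x_r,x_{r+h}\}$, and the total for this $x$ is at most $S_h(\mathbf x)$. By \eqref{eq:fixed-difference-matching} this is at most $\min\{u,\tfrac{p}{p-1}\nu_q(B)\}=u\min\{\tfrac p{p-1}s,1\}$. Summing over the $v$ choices of $x$ gives the third term.

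For type (iv), the copy is determined by the pair $(y,z)$, with $y<z$ both in a single class $c\not\equiv R$ of $C$. Write the elements of $C$ in that class as $c+qw$, and let $C_c$ denote this part.
\begin{itemize}
\item Since $x\equiv R$, we get $ad\equiv c-R\not\equiv0$, so $d\equiv d_c\not\equiv0\pmod q$ is fixed.
\item Then $(z-y)/q=(g/q)d\equiv h_c\not\equiv0\pmod q$.
\end{itemize}
So $w$ and $w'$ lie in classes modulo $q$ that differ by $h_c$. Each unordered pair $\{y,z\}$ yields at most one copy, since $z$ must be the larger element. The number of admissible pairs within $C_c$ is therefore at most $E_{h_c}$ of the weight vector of the classes of $\{w\}$ modulo $q$. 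This is where I must check that a cycle of length two, possible when $p=2$, is still counted once. By \eqref{eq:fixed-difference-pairs} this is at most $\kappa_p|C_c|^2$, and summing over $c$ gives at most $\kappa_p\sum_c|C_c|^2\le\kappa_pv^2$.

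The main point requiring care is type (iv). A naive count of same-class pairs in $C$ could be of order $v^2/2$. The gain comes from observing that $d\bmod q$ is forced by the class of $x$, which pushes the constraint one level down to residues modulo $q^2$. That is exactly the setting of \cref{lem:fixed-difference}.
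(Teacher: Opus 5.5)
Your proposal is correct and follows the paper's proof essentially step by step. You use the same four-way classification by whether $x$ and the congruent pair $y\equiv z$ lie in the largest class, reduce type (i) to $B$, bound type (iii) per $x$ by $S_h$ via \eqref{eq:fixed-difference-matching}, and bound type (iv) on each outside class by $E_{h}$ via \eqref{eq:fixed-difference-pairs}, including the correct observation that $y<z$ makes each unordered pair count once for length-two cycles.
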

\begin{proof}
Write the three points of a copy as
\[
 x,\qquad Y=x+ad,\qquad Z=x+(a+g)d.
\]
Because $Z-Y=gd$ and $q\mid g$, the last two points satisfy
$Y\equiv Z\pmod q$.

There are four possibilities.  If all three points lie in $R+qB$, then
$q\mid d$, and translating and dividing by $q$ turns the copy into a copy
inside $B$.  If all three points lie outside $R+qB$, it is counted by
$M_P^+(C)$.

Next suppose that $x\in C$ while $Y,Z\in R+qB$.  Fix the residue class of
$x$ modulo $q$.  Since $a$ is coprime to $q$, the congruence
\[
 Y-x\equiv ad\pmod q
\]
determines $d$ modulo $q$, and this residue is nonzero.  After $Y$ and $Z$
are translated and divided by $q$, their residues modulo $q$ differ by some
fixed nonzero $h$.  If $B_r$ denotes the part of $B$ in residue class $r$,
then for each fixed $x$ the number of choices is at most
\[
 S_h\bigl((|B_r|)_r\bigr).
\]
Indeed, a copy gives a matched pair between the two prescribed residue
classes of $B$, and for fixed $x$ either later point determines the other.  By
\eqref{eq:fixed-difference-matching}, this is at most
\[
 \min\left\{u,\frac{p}{p-1}\nu_q(B)\right\}.
\]
There are $v$ possible choices of $x$, which gives the third term in
\eqref{eq:largest-class-split}.

Finally, suppose that $x\in R+qB$ and that $Y,Z$ lie in one fixed residue
class outside it.  The pair $(Y,Z)$ determines $x$.  Translate and divide
that outside class by $q$.  The allowed residue classes of $Y$ and $Z$ then
differ by a fixed nonzero amount, so \eqref{eq:fixed-difference-pairs} shows that a
class of size $v_i$ contributes at most $\kappa_pv_i^2$ copies.  Summing
over the outside classes and using $\sum_i v_i^2\le v^2$ gives the final
term.
\end{proof}
We now verify the numerical inequality that makes the induction work.  The
function below is exactly the difference between the desired coefficient
and the four terms in \eqref{eq:largest-class-split}, after all sizes are
divided by $u$.
\begin{lemma}[Numerical estimate for $99/400$]
\label{lem:numeric-99}
Put $\varepsilon=1/400$.  For a prime $p$, let
\[
 h_p(s)=\min\left\{\frac{p}{p-1}s,1\right\}.
\]
For $0\le r\le1/9$ and $0\le s\le1$, define
\begin{equation}\label{eq:def-Hp}
 H_p(r,s)=
 \max\{\varepsilon,G(s)\}-\varepsilon
 +\left(\frac12-2\varepsilon-h_p(s)\right)r
 -\kappa_pr^2.
\end{equation}
Then $H_p(r,s)\ge0$.
\end{lemma}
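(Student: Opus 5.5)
Since $-\kappa_p r^2$ is concave in $r$ and the other $r$-dependence is linear, $H_p(r,s)$ is concave in $r$ for fixed $s$. So it suffices to check the two endpoints $r=0$ and $r=1/9$. At $r=0$ we have $H_p(0,s)=\max\{\varepsilon,G(s)\}-\varepsilon\ge0$ trivially. The whole proof reduces to the one-variable inequality
\[
 F_p(s):=\max\{\varepsilon,G(s)\}-\varepsilon+\frac{1}{9}\Bigl(\frac{99}{200}-h_p(s)\Bigr)-\frac{\kappa_p}{81}\ge0,\qquad 0\le s\le1 .
\]
Next I reduce the primes to two cases. For $p\ge3$ we have $h_p(s)\le h_3(s)=\min\{\tfrac32 s,1\}$ and $\kappa_p\le\tfrac13$, so $F_p\ge F_3$. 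It therefore suffices to treat $p=2$, where $h_2=\min\{2s,1\}$ and $\kappa_2=\tfrac14$, and $p=3$.

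I then split by the piecewise structure of $G$ and $h_p$. Note that $G(s)\le\varepsilon$ exactly when $s\le 1/10$.
\begin{itemize}
\item On $[0,1/10]$ the max equals $\varepsilon$ and $h_p(s)\le 2s\le1/5$. So $F_p\ge\frac19(\frac{99}{200}-\frac15)-\frac1{243}>0$.
\item On $[1/10,1/2]$ we have $F_p(s)=\frac{s^2}{4}-\varepsilon+\frac19(\frac{99}{200}-c_ps)-\frac{\kappa_p}{81}$ with $c_2=2$ and $c_3=3/2$ (if the linear part is active; if $h_p=1$ the bound is only better there, since then $c_p s\ge 1$). This is a convex quadratic, minimized at $s=2c_p/9$, i.e.\ $s=4/9$ for $p=2$ and $s=1/3$ for $p=3$.
\item On $[1/2,1]$ we have $G=(1+(2s-1)^2)/16$, and $h_p$ is linear up to its kink and then equals $1$. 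Here I check convexity on each subpiece and evaluate at the critical point or endpoints. For $p=2$ the value $\frac1{16}-\varepsilon-\frac{101}{1800}-\frac1{324}$ is positive. For $p=3$ the kink is at $s=2/3$, where $G=\frac{10}{144}$, which makes the estimate comfortable. On $[1/2,2/3]$ a crude check with $G\ge1/16$ and $h_3\le\frac32 s$ suffices.
\end{itemize}

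The main obstacle, and the reason for the constant $99/400$, is the case $p=2$ near $s=4/9$, where the inequality is nearly tight. There I will compute exactly in rationals:
\[
 F_2(4/9)=\frac{4}{81}-\frac1{400}-\frac{709}{16200}-\frac1{324}=\frac{1600-81-1418-100}{32400}=\frac{1}{32400}>0 .
\]
Since $4/9$ is the global minimizer of the convex quadratic on that piece, $F_2\ge 1/32400$ on all of $[1/10,1/2]$. For $p=3$ at $s=1/3$ the margin is large, roughly $0.02$.

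Combining the pieces gives $F_p\ge0$ on $[0,1]$ for every prime $p$. Concavity in $r$ then yields $H_p(r,s)\ge0$ throughout $0\le r\le 1/9$, $0\le s\le 1$.
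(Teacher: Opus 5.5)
Your route is the paper's: concavity in $r$, reduction to $p\in\{2,3\}$, and the same breakpoints and critical points, including the tight value $F_2(4/9)=1/32400$. Sending $p\ge5$ to $p=3$ rather than to $p=2$ is equally valid. However, \textbf{one step fails as written}: for $p=3$ on $[1/2,2/3]$, the ``crude check'' using $G\ge1/16$ and $h_3(s)\le\frac32 s$ separately does not suffice. The bound $\frac32 s$ reaches $1$ at $s=2/3$, so decoupling the two estimates gives there
\[
 \frac1{16}-\frac1{400}-\frac{101}{1800}-\frac1{243}=-\frac{11}{48600}<0 .
\]
Your crude lower bound equals $\frac{10778}{97200}-\frac s6$, which is negative for $s$ between about $0.6653$ and $2/3$. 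The margin is thin here because $\kappa_3/81=1/243$ exceeds $\kappa_2/81=1/324$ by $1/972$. That difference is larger than the $p=2$ margin $13/16200$ at $s=1/2$, which is the value your crude bound effectively reuses.

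The repair is to evaluate $G$ and $h_3$ at the same point. On $[1/2,2/3]$,
\[
 F_3(s)=\frac{1+(2s-1)^2}{16}-\varepsilon+\frac19\Bigl(\frac{99}{200}-\frac32 s\Bigr)-\frac1{243},
 \qquad
 F_3'(s)=\frac{2s-1}{4}-\frac16\le\frac1{12}-\frac16<0 .
\]
So $F_3$ decreases on this piece, and its minimum is at $s=2/3$. Using $G(2/3)=10/144$, that minimum is $F_3(2/3)=653/97200>0$. This is the same endpoint you already need for $[2/3,1]$, and it is exactly how the paper treats this piece. With that replacement the proof is complete. You should also record the exact value $653/97200$ instead of calling the estimate ``comfortable'', since every margin in this lemma is small.
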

\begin{proof}
For fixed $s$, the function of $r$ is concave, so its minimum on
$0\le r\le1/9$ occurs at $r=0$ or $r=1/9$.  At $r=0$ the claim is
immediate.  If $p\ge5$, then $h_p(s)\le h_2(s)$ and
$\kappa_p=\kappa_2=1/4$, so the case $p=2$ gives the weaker bound.  It
therefore remains to check $p=2$ and $p=3$ at $r=1/9$.
For $p=2$, the formula changes at $s=1/10$, where
$G(s)=\varepsilon$, and at $s=1/2$, where $h_2(s)$ reaches $1$.
On $[0,1/10]$ the expression decreases, and at $s=1/10$ it equals
$481/16200$.  On $[1/10,1/2]$ it is the convex quadratic
\[
 \frac{s^2}{4}-\varepsilon
 +\frac19\left(\frac12-2\varepsilon-2s\right)-\frac1{324}.
\]
Its minimum occurs at $s=4/9$ and equals $1/32400$.  On
$[1/2,1]$, the function $h_2$ is constant and $G$ increases, so the
minimum is at $s=1/2$ and equals $13/16200$.
For $p=3$, the corresponding points are $s=1/10$, $1/2$, and $2/3$.
On $[0,1/10]$ the minimum is the endpoint value $1663/48600$.  On
$[1/10,1/2]$ the quadratic minimum occurs at $s=1/3$ and equals
$2003/97200$.  On $[1/2,2/3]$ the expression decreases, and on
$[2/3,1]$ it increases.  Its value at $s=2/3$ is
$653/97200$.  All these numbers are positive.
\end{proof}
\begin{proof}[Proof of \cref{thm:positive-general}]
Reflect the pattern if necessary so that it has the form
\eqref{eq:pattern-right-gap}, and choose $p$ and $q$ as above.  We prove the
stated finite estimate
\begin{equation}\label{eq:positive-induction}
 M_P^+(A)\le
 \left(\frac14-\frac1{400}\right)|A|^2+(|A|-1)
\end{equation}
by induction on $|A|$.
The case $|A|=1$ is immediate.  If all elements of $A$ lie in one residue
class modulo $q$, translate that class and divide it by $q$.  This leaves
the copy count unchanged.  Repeat until at least two residue classes are
used.  When $|A|>1$, this process terminates: two distinct integers cannot
remain congruent modulo arbitrarily high powers of $q$.

Put $n=|A|$.  If at least $n/10$ elements lie outside the largest residue
class, then $G(\nu_q(A)/n)\ge G(1/10)=1/400$.  Lemma
\ref{lem:residue-spread} gives
\[
 M_P^+(A)\le\left(\frac14-\frac1{400}\right)n^2+1,
\]
which implies \eqref{eq:positive-induction} because $n\ge2$.

We may therefore assume $\nu_q(A)<n/10$.  Apply
\cref{prop:largest-class-split}.  Let $u=|B|$, $v=|C|$,
$r=v/u$, and $s=\nu_q(B)/u$.  Since $v<n/10$ and $u=n-v$, we have
$0<v<u/9$.  The induction hypothesis gives one bound for $M_P^+(B)$, while
\cref{lem:residue-spread} gives another.  Taking the better of the two,
\[
 M_P^+(B)
 \le \frac14u^2-
 \max\left\{\frac1{400},G(s)\right\}u^2+u.
\]
For $C$, induction gives
\[
 M_P^+(C)
 \le\left(\frac14-\frac1{400}\right)v^2+(v-1).
\]
Substitute these two estimates into
\eqref{eq:largest-class-split}.  After moving the desired right side of
\eqref{eq:positive-induction} to the other side, the remaining quadratic
quantity is
\[
 u^2H_p(r,s),
\]
which is nonnegative by \cref{lem:numeric-99}.  The linear terms are
$u+(v-1)=n-1$.  This completes the induction.
\end{proof}

\section{Copies with Both Signs of the Dilation}
\label{sec:both-signs}
\subsection{Choosing an odd prime divisor}
The next observation explains why the arithmetic progression
$\{0,1,2\}$ is the only pattern excluded from the argument in this section.
\begin{lemma}\label{lem:odd-prime}
If $0<a<b$, $\gcd(a,b)=1$, and none of $a$, $b$, or $b-a$ has an odd
prime divisor, then $(a,b)=(1,2)$.
\end{lemma}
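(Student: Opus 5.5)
The plan is to use the hypothesis that $a$, $b$ and $b-a$ all lack odd prime divisors, which makes each of them a power of $2$, and then let coprimality pin the exponents down. Write $a=2^i$, $b=2^j$, and $b-a=2^k$ with $i,j,k\ge0$; since $0<a<b$, all three numbers are positive integers, so these exponents exist.

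First, I will use $\gcd(a,b)=1$. Two powers of $2$ are coprime only if one of them equals $1$. Because $a<b$ forces $j>i\ge0$, we have $j\ge1$, so $b$ is even; hence $a$ must be the one equal to $1$, giving $i=0$.

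Next, the relation $b-a=2^k$ becomes $2^j-1=2^k$. The left side is odd because $j\ge1$. An odd power of $2$ is $1$, so $k=0$, which gives $2^j=2$ and hence $j=1$. Therefore $(a,b)=(1,2)$, as claimed.

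I do not expect any real obstacle here; the only point needing care is the parity step, namely that $j\ge1$ (from $b>a\ge1$) is what makes $2^j-1$ odd. That fact is used once to force $a=1$ and again to force $b-a=1$.
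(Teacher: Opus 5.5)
Your proof is correct and essentially the paper's argument: both use that $a$, $b-a$, $b$ are powers of $2$, together with coprimality and the relation $a+(b-a)=b$. The only difference is order. The paper first notes that a sum of two powers of $2$ is a power of $2$ only when the summands are equal, giving $b=2a$, and then applies $\gcd(a,b)=1$; you apply coprimality first to get $a=1$, and then use parity of $2^j-1$ to get $b=2$.
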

\begin{proof}
The three positive integers $a$, $b-a$, and $b$ must all be powers of $2$,
and
\[
 a+(b-a)=b.
\]
A sum of two powers of $2$ is itself a power of $2$ only when the two
summands are equal.  Hence $b=2a$.  Since $\gcd(a,b)=1$, we must have
$a=1$ and $b=2$.
\end{proof}

Fix a pattern $P\ne\{0,1,2\}$ in coprime form.  By
\cref{lem:odd-prime}, some odd prime $p$ divides one of its three pairwise
differences.  Relabel the three points as $\alpha,\beta,\gamma$ so that
\begin{equation}\label{eq:alpha-beta-gamma}
 \gamma-\alpha=q\ell,
 \qquad
 \beta-\alpha=m,
 \qquad
 q=p^{v_p(\gamma-\alpha)},
\end{equation}
where $p\nmid\ell m$.  The prime $p$ cannot divide two pairwise differences,
for then it would divide all of them and contradict the coprime form of the
pattern.  Thus both $\ell$ and $m$ have inverses modulo $q$.  Define
\begin{equation}\label{eq:def-lambda}
 \lambda=m\ell^{-1}\pmod q.
\end{equation}
For a copy of $P$, write
\[
 X=x+\alpha d,
 \qquad Y=x+\beta d,
 \qquad Z=x+\gamma d.
\]
These three integers satisfy
\begin{equation}\label{eq:pattern-relation}
 (\gamma-\beta)X+(\alpha-\gamma)Y+(\beta-\alpha)Z=0.
\end{equation}
All three coefficients are nonzero.  Consequently
\cref{lem:three-set} can be applied after we prescribe the residue classes
from which $X,Y,Z$ are chosen.

\subsection{Combining the two directions between residue classes}
Write $\Fq=\mathbb Z/q\mathbb Z$ for the set of residues modulo $q$, and
attach a nonnegative weight $x_r$ to each $r\in\Fq$.  For $h\ne0$, define
\begin{equation}\label{eq:def-Ph-Jh}
 P_h(\mathbf x)=\sum_{r\in\Fq}x_rx_{r+h},
 \qquad
 J_h(\mathbf x;w)=\sum_{r\in\Fq}\Phi(x_r,x_{r+h},w),
\end{equation}
and write $|\mathbf x|=\sum_r x_r$.  The quantity $P_h$ is the weighted
number of pairs in residue classes whose difference is $h$.  The quantity
$J_h$ is what the three-set bound gives after a third set of size $w$ is
added.

\begin{lemma}[A bound for adjacent residue classes]
\label{lem:adjacent-pair-bound}
For every nonnegative $\mathbf x$ and every $h\ne0$,
\begin{equation}\label{eq:adjacent-pair-bound}
 P_h(\mathbf x)\le \frac13|\mathbf x|^2.
\end{equation}
\end{lemma}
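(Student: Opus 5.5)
The plan is to reduce \eqref{eq:adjacent-pair-bound} to the second estimate \eqref{eq:fixed-difference-pairs} of \cref{lem:fixed-difference}. That estimate already bounds a weighted edge sum over the graph on $\Fq$ whose edges join residues differing by $h$. The only work is to check that $P_h(\mathbf x)$ equals $E_h(\mathbf x)$ in the present setting, and that $\kappa_p\le 1/3$.

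First, recall the setting: $q=p^k$ with $p$ an odd prime, as chosen via \cref{lem:odd-prime} and \eqref{eq:alpha-beta-gamma}. Repeatedly adding $h$ partitions $\Fq$ into cycles of length $L$, where $L$ is the least positive integer with $Lh\equiv0\pmod q$. Since $L$ is a power of $p$ and $L\ge p\ge3$, no cycle has length one or two. In particular $2h\not\equiv0\pmod q$.

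Next, compare $P_h$ with $E_h$. Each term $x_rx_{r+h}$ of $P_h$ corresponds to the unordered pair $\{r,r+h\}$. I will check that every unordered pair $\{r,s\}$ with $s=r+h$ or $r=s+h$ arises from exactly one $r$ in the sum. It could arise twice only if both $s=r+h$ and $r=s+h$ held, which forces $2h\equiv0$, excluded above. Hence $P_h(\mathbf x)=E_h(\mathbf x)$.

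Finally, apply \eqref{eq:fixed-difference-pairs}, which gives $E_h(\mathbf x)\le\kappa_p|\mathbf x|^2$. Since $\kappa_p\in\{1/4,1/3\}$ by \eqref{eq:def-kappa-p}, we have $\kappa_p\le1/3$, and \eqref{eq:adjacent-pair-bound} follows. The bound $1/3$ is attained when $p=3$ and the weight sits equally on one $3$-cycle. This is why the lemma is stated uniformly with $1/3$ rather than $\kappa_p$.

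The only delicate point is the oddness of $q$. If $q$ were even and $h=q/2$, then $P_h$ would count the pair $\{r,r+h\}$ twice. Weights $1,1$ would then give $P_h=2>\tfrac13\cdot4$, so the bound would fail. The proof must therefore use explicitly that $p$ is odd, so that no cycle has length two. Everything else is inherited from the Motzkin--Straus-type argument already carried out in the proof of \cref{lem:fixed-difference}.
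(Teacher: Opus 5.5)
Your proposal is correct and follows the paper's own argument. Because $q$ is a power of an odd prime, the cycles generated by $h$ have odd length at least $3$, so $P_h(\mathbf x)=E_h(\mathbf x)$, and \eqref{eq:fixed-difference-pairs} with $\kappa_p\le 1/3$ gives the bound; your write-up just spells out the identification $P_h=E_h$, and why it would fail for $2$-cycles, which the paper states in one line.
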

\begin{proof}
Because $q$ is odd, repeatedly adding $h$ gives cycles of odd length, and
$P_h(\mathbf x)$ counts each neighboring pair on those cycles once.
Therefore \eqref{eq:fixed-difference-pairs} gives
\[
 P_h(\mathbf x)\le\kappa_p|\mathbf x|^2\le\frac13|\mathbf x|^2.
\]
\end{proof}

The following one-variable calculation produces the coefficient $13/28$.
Its variables will later be the ratio of two class sizes and two pair counts
after division by the relevant squares.
\begin{lemma}[Numerical estimate for $13/28$]\label{lem:numeric-13-28}
For $0\le t\le1$ and $0\le s,u\le1/3$,
\begin{equation}\label{eq:numeric-13-28}
 s\,\psi\left(\frac{t}{2s}\right)+t^2u
 \le \frac67t+\frac1{28}(s+t^2u),
\end{equation}
where the first term is interpreted as $0$ when $s=0$.
\end{lemma}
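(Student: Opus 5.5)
The plan is to reduce \eqref{eq:numeric-13-28} to a two-variable inequality by removing $u$, and then to split according to which branch of $\psi$ in \eqref{eq:def-psi} is active.

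\emph{Removing $u$.} Moving the term $\frac1{28}t^2u$ to the left, the inequality becomes
\[
 s\,\psi\!\left(\frac{t}{2s}\right)-\frac{s}{28}+\frac{27}{28}t^2u\le\frac67t .
\]
The left side is increasing in $u$, so it suffices to take $u=1/3$. It then remains to prove
\[
 F(s,t):=s\,\psi\!\left(\frac{t}{2s}\right)-\frac{s}{28}+\frac{9}{28}t^2\le\frac67t
 \qquad(0\le t\le1,\ 0\le s\le\tfrac13).
\]

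\emph{Case $t\ge 2s$ (including $s=0$).} Here $\psi=1$, so $F=\frac{27}{28}s+\frac9{28}t^2$. Using $s\le t/2$, it suffices that
\[
 \frac{27}{56}t+\frac{9}{28}t^2\le\frac{48}{56}t,
\]
which is equivalent to $18t^2\le21t$, i.e.\ $t\le7/6$. This holds since $t\le1$.

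\emph{Case $t<2s$.} Now $\psi(x)=2x-x^2$ with $x=t/(2s)$, so $s\psi=t-t^2/(4s)$. The required inequality becomes
\[
 \frac t7+\frac{9}{28}t^2\le \frac{t^2}{4s}+\frac{s}{28}.
\]
I minimize the right side over $s\in(t/2,1/3]$. As a function of $s$ it is convex, with unconstrained minimizer $s^*=\sqrt7\,t$, where its value is $t/\sqrt{28}$.

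\emph{Subcase $s^*\le1/3$, i.e.\ $t\le1/(3\sqrt7)$.} Since $9t/28\le 3/(28\sqrt7)$ on this range, it suffices that
\[
 \frac17+\frac{3}{28\sqrt7}\le\frac1{\sqrt{28}} .
\]
Numerically this is $0.1833\ldots\le0.1889\ldots$, a short explicit check.

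\emph{Subcase $t>1/(3\sqrt7)$.} The minimum over $s$ is at the endpoint $s=1/3$, giving the target
\[
 \frac34t^2+\frac1{84}\ge\frac t7+\frac9{28}t^2 .
\]
Multiplying by $84$ turns this into $36t^2-12t+1=(6t-1)^2\ge0$. This is the tight point $t=1/6$, $s=1/3$, $u=1/3$, which explains the constants $6/7$ and $1/28$.

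The only delicate step is the case $t<2s$. One must notice that the minimizing $s$ is either interior, which is handled by AM--GM with a small numerical margin, or at the boundary $s=1/3$, where the inequality collapses to a perfect square. I expect the small-$t$ numerical comparison to be the place needing the most care, though it is routine.
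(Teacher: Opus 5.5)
Your proof is correct and follows essentially the same route as the paper: reduce to $u=1/3$, split according to whether $\psi$ saturates, optimize at $s=\sqrt7\,t$ for $t\le 1/(3\sqrt7)$ (your numerical comparison is equivalent to the paper's exact check $4\sqrt7\le 11$), and reduce to $(6t-1)^2\ge0$ at $s=1/3$ otherwise. The only difference is organizational. You handle the saturated branch $s\le t/2$ with a direct linear bound, and this covers the range $t\ge 2/3$ that the paper treats separately via $9t^2-24t+9\le0$.
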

\begin{proof}
Set $B=1/28$.  After subtracting $B(s+t^2u)$, it is enough to prove
\begin{equation}\label{eq:numeric-13-28-reduced}
 s\left(\psi\left(\frac{t}{2s}\right)-B\right)
 +(1-B)t^2u\le\frac67t.
\end{equation}
The case $t=0$ is immediate.  The left side increases with $u$, so we may
put $u=1/3$.

For fixed $t>0$, define
\[
 f_t(s)=s\left(\psi\left(\frac{t}{2s}\right)-B\right).
\]
If $s\le t/2$, then $\psi(t/(2s))=1$ and
$f_t(s)=(1-B)s$.  If $s\ge t/2$, then
\[
 f_t(s)=t-\frac{t^2}{4s}-Bs.
\]
The latter expression is largest at
$s=t/(2\sqrt B)=t\sqrt7$, unless this point lies beyond $1/3$.
Consequently
\[
 \max_{0\le s\le1/3}f_t(s)=
 \begin{cases}
 t(1-\sqrt B),&0<t\le1/(3\sqrt7),\\[1mm]
 \dfrac13\bigl(\psi(3t/2)-B\bigr),&t\ge1/(3\sqrt7).
 \end{cases}
\]
In the first range, divide the left side of
\eqref{eq:numeric-13-28-reduced} by $t$.  It is at most
\[
 1-\frac1{2\sqrt7}+\frac9{28}t
 \le1-\frac{11}{28\sqrt7}
 \le\frac67,
\]
where the last inequality is equivalent to $4\sqrt7\le11$.
In the second range, the largest value occurs at $s=u=1/3$.  For
$t\le2/3$, the left side of \eqref{eq:numeric-13-28} minus its right side is
\[
 -\frac{(6t-1)^2}{84}.
\]
For $2/3\le t\le1$, the same difference is
\[
 \frac{9t^2-24t+9}{28}\le0.
\]
This proves the lemma.
\end{proof}
\begin{proposition}[Combining the two possible directions]
\label{prop:opposite-differences}
Let $q$ be odd, let $h\ne0$ in $\Fq$, and suppose
$|\mathbf x|=a$ and $|\mathbf y|=b$.  Then
\begin{equation}\label{eq:opposite-differences}
 J_h(\mathbf x;b)+J_{-h}(\mathbf y;a)
 \le \frac67ab
 +\frac1{28}\bigl(P_h(\mathbf x)+P_{-h}(\mathbf y)\bigr).
\end{equation}
\end{proposition}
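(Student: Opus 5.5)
The plan is to reduce \eqref{eq:opposite-differences} to \cref{lem:numeric-13-28}. We bound the two $J$-terms asymmetrically: the one whose third set is the \emph{smaller} of $a,b$ keeps its $\psi$-factor, and the other is bounded crudely by its pair count. By symmetry we may assume $b\le a$; otherwise exchange the roles of $(\mathbf x,h)$ and $(\mathbf y,-h)$, since the statement is symmetric under this swap. If $a=0$ then $\mathbf x=0$, so $J_h(\mathbf x;b)=0$; also $J_{-h}(\mathbf y;0)=0$ because $\Phi(u,v,0)=0$. So we may assume $a>0$ and put
\[
t=\frac ba\in[0,1],\qquad s=\frac{P_h(\mathbf x)}{a^2},\qquad u=\frac{P_{-h}(\mathbf y)}{b^2}
\]
(with $u=0$ if $b=0$). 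By \cref{lem:adjacent-pair-bound}, $s,u\le1/3$, so the hypotheses of \cref{lem:numeric-13-28} hold.

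\emph{Step 1: the crude term.} Since $\psi\le1$, \eqref{eq:Phi-psi} gives $\Phi(y_r,y_{r-h},a)\le y_ry_{r-h}$; this also holds when $y_r+y_{r-h}=0$, where both sides vanish. Summing over $r$,
\[
J_{-h}(\mathbf y;a)\le P_{-h}(\mathbf y)=b^2u=a^2t^2u .
\]

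\emph{Step 2: the $\psi$-term, via Jensen.} By \eqref{eq:Phi-psi},
\[
J_h(\mathbf x;b)\le\sum_r x_rx_{r+h}\,\psi\!\left(\frac{b}{x_r+x_{r+h}}\right),
\]
where terms with $x_rx_{r+h}=0$ contribute $0$ (then $\Phi$ is $0$ directly). If $P_h(\mathbf x)=0$, then $J_h=0$, matching the convention $s=0$ in \cref{lem:numeric-13-28}. Otherwise, use the weights $w_r=x_rx_{r+h}/P_h(\mathbf x)$. The function $\psi$ is concave and nondecreasing on $[0,\infty)$, so Jensen gives
\[
\sum_r w_r\psi\!\left(\frac{b}{x_r+x_{r+h}}\right)\le\psi\!\left(\frac{b}{P_h(\mathbf x)}\sum_r\frac{x_rx_{r+h}}{x_r+x_{r+h}}\right).
\]
By AM--GM, $x_rx_{r+h}/(x_r+x_{r+h})\le(x_r+x_{r+h})/4$, and summing over $r$ gives at most $2a/4=a/2$. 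By monotonicity of $\psi$,
\[
J_h(\mathbf x;b)\le P_h(\mathbf x)\,\psi\!\left(\frac{ab}{2P_h(\mathbf x)}\right)=a^2\,s\,\psi\!\left(\frac{t}{2s}\right).
\]

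\emph{Step 3: conclusion.} Adding Steps 1 and 2 and applying \cref{lem:numeric-13-28} gives
\[
J_h(\mathbf x;b)+J_{-h}(\mathbf y;a)\le a^2\Bigl(s\psi\bigl(\tfrac t{2s}\bigr)+t^2u\Bigr)\le a^2\Bigl(\tfrac67t+\tfrac1{28}(s+t^2u)\Bigr).
\]
The right side equals $\frac67ab+\frac1{28}\bigl(P_h(\mathbf x)+P_{-h}(\mathbf y)\bigr)$, which is \eqref{eq:opposite-differences}.

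The main obstacle is Step 2. One must see that the correct aggregate argument is $ab/(2P_h)$, which is exactly what makes the variable $t/(2s)$ of \cref{lem:numeric-13-28} appear. This requires both concavity and monotonicity of $\psi$ and the AM--GM bound relating the harmonic-type sum to $|\mathbf x|$. The other essential choice is making the reduction $b\le a$ first, so that $t\le1$ as \cref{lem:numeric-13-28} requires.
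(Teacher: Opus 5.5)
Your proof is correct and follows the paper's argument step for step: reduce to $b\le a$, bound $J_{-h}(\mathbf y;a)$ by $P_{-h}(\mathbf y)$, and apply Jensen with weights $x_rx_{r+h}$ together with the AM--GM estimate to get $J_h(\mathbf x;b)\le P_h(\mathbf x)\,\psi\bigl(ab/(2P_h(\mathbf x))\bigr)$, then conclude with \cref{lem:numeric-13-28}. The differences are only cosmetic: the paper notes the exact equality $J_{-h}(\mathbf y;a)=P_{-h}(\mathbf y)$ (since $a\ge b\ge y_r+y_{r-h}$), and it treats the case $P_h(\mathbf x)=0$ directly instead of through the lemma's $s=0$ convention.
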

\begin{proof}
Assume $0\le b\le a$; the other case follows by interchanging the two
vectors.  If $b=0$, both terms on the left vanish, so the claim is immediate.

For every $r$ we have $a\ge b\ge y_r+y_{r-h}$, and therefore
\begin{equation}\label{eq:large-third-set}
 J_{-h}(\mathbf y;a)=P_{-h}(\mathbf y).
\end{equation}
If $P_h(\mathbf x)=0$, then $J_h(\mathbf x;b)=0$, while
\cref{lem:adjacent-pair-bound} gives
$P_{-h}(\mathbf y)\le b^2/3\le ab/3$; hence the claim is immediate.

We may therefore assume $P_h(\mathbf x)>0$.  The function $\psi$ is concave
and nondecreasing.  Apply \eqref{eq:Phi-psi} term by term and average with
weights $x_rx_{r+h}$.  This gives
\begin{align}
 J_h(\mathbf x;b)
 &\le P_h(\mathbf x)\,
 \psi\left(
  \frac{b}{P_h(\mathbf x)}
  \sum_r\frac{x_rx_{r+h}}{x_r+x_{r+h}}
 \right)\notag\\
 &\le P_h(\mathbf x)\,
 \psi\left(\frac{ab}{2P_h(\mathbf x)}\right).
 \label{eq:cycle-Jensen}
\end{align}
The last step uses
\[
 \frac{x_rx_{r+h}}{x_r+x_{r+h}}
 \le\frac{x_r+x_{r+h}}4
\]
and hence
\[
 \sum_r\frac{x_rx_{r+h}}{x_r+x_{r+h}}\le\frac a2.
\]
Terms with zero denominator may simply be omitted.

Set
\[
 t=\frac ba,
 \qquad
 s=\frac{P_h(\mathbf x)}{a^2},
 \qquad
 u=\frac{P_{-h}(\mathbf y)}{b^2}.
\]
By \cref{lem:adjacent-pair-bound}, $s,u\le1/3$.  Dividing the desired
inequality by $a^2$ now turns it exactly into \cref{lem:numeric-13-28}.
\end{proof}

\subsection{Repeatedly splitting into residue classes}

\begin{proof}[Proof of \cref{thm:both-signs-general}]
Choose an integer $T$ large enough that distinct elements of $A$ are not
congruent modulo $q^T$.  At level $t$, split $A$ into its nonempty residue
classes modulo $q^t$.  Join each class to the finer classes modulo
$q^{t+1}$ that it contains.  This produces a rooted tree whose leaves are
the individual elements of $A$.  The tree is simply a convenient way to record the first level at which
points become separated.

Consider a nonconstant copy and write $d=q^tu$ with $q\nmid u$.  At the
node representing the common residue class modulo $q^t$, the points $X$ and
$Z$ lie in one child, while $Y$ lies in a different child.  At the next
level, $X$ and $Z$ lie in different grandchildren.  This is the unique node
to which we assign the copy.

To see the residue relation explicitly, let $i$ be the child residue of
$X$ and $Z$, let $j$ be the child residue of $Y$, and let $r,s$ be the
grandchild residues of $X,Z$ within child $i$.  Equations
\eqref{eq:alpha-beta-gamma} give
\begin{equation}\label{eq:general-digit-rule}
 j-i\equiv mu,
 \qquad
 s-r\equiv\ell u,
 \qquad
 j-i\equiv\lambda(s-r)\pmod q.
\end{equation}

Fix a node $v$.  Let $a_i$ be the number of elements in child $i$, and put
\[
 N_v=\sum_i a_i.
\]
Within child $i$, let
\[
 \mathbf b_i=(b_{i,r})_{r\in\Fq},
 \qquad |\mathbf b_i|=a_i,
\]
where $b_{i,r}$ is the size of grandchild $r$.  Let $C_v$ be the number of
copies assigned to $v$.  After fixing $i$, $j$, and the two grandchild
residues allowed by \eqref{eq:general-digit-rule}, apply
\cref{lem:three-set}.  Summing those bounds gives
\begin{equation}\label{eq:copies-at-node}
 C_v
 \le
 \sum_{i\ne j}
 J_{\lambda^{-1}(j-i)}(\mathbf b_i;a_j)
 +\frac{q^2(q-1)}4.
\end{equation}
There are $q$ choices of $i$, $q-1$ choices of the nonzero difference
$j-i$, and $q$ choices of the first grandchild residue.  This explains the
$q^2(q-1)$ additive terms of size $1/4$.

Combine the terms for $(i,j)$ and $(j,i)$ using
\cref{prop:opposite-differences}.  Define
\begin{align}
 D_v&=N_v^2-\sum_i a_i^2,
 \label{eq:def-general-D}\\
 E_v&=\sum_i\left(a_i^2-\sum_r b_{i,r}^2\right).
 \label{eq:def-general-E}
\end{align}
The number $D_v$ is the number of ordered pairs of elements that lie in
different children of $v$.  The number $E_v$ is the number of ordered pairs
that lie in the same child of $v$ but in different grandchildren.

For fixed $i$, as $j\ne i$ varies, the values
$\lambda^{-1}(j-i)$ run through all nonzero residues.  Therefore
\[
 \sum_{h\ne0}P_h(\mathbf b_i)
 =a_i^2-\sum_r b_{i,r}^2.
\]
It follows that every node with at least two nonempty children satisfies
\begin{equation}\label{eq:node-pair-bound}
 C_v
 \le \frac37D_v+\frac1{28}E_v+\frac{q^2(q-1)}4.
\end{equation}
If a node has only one nonempty child, no copy is assigned there and no
additive term is needed.

Every ordered pair of distinct elements of $A$ first lies in different
children at exactly one node.  Hence
\begin{equation}\label{eq:sum-general-D}
 \sum_vD_v=n^2-n.
\end{equation}
Also, $E_v$ is the sum of $D_w$ over the children $w$ of $v$.  Thus
\begin{equation}\label{eq:sum-general-E}
 \sum_vE_v=\sum_{w\ne\mathrm{root}}D_w\le n^2-n.
\end{equation}

A rooted tree with $n$ leaves has at most $n-1$ nodes with at least two
children.  Summing \eqref{eq:node-pair-bound} over all nodes gives
\[
 M_P(A)
 \le\left(\frac37+\frac1{28}\right)(n^2-n)
 +\frac{q^2(q-1)}4(n-1),
\]
which is exactly \cref{thm:both-signs-general}.
\end{proof}

\section{A Sharper Bound for \texorpdfstring{$\{0,1,3\}$}{0,1,3}}
\label{sec:013-stronger}
We now specialize to
\begin{equation}\label{eq:main-equation}
 2x+z=3y.
\end{equation}
Use the repeated splitting from the previous section with $q=3$.  Assign a
nonconstant solution to the first node at which its three points separate.
At that node, $x$ and $z$ lie in one child, $y$ lies in another child, and
one level lower $x$ and $z$ lie in different grandchildren.  If $i$ is the
child residue of $x,z$, if $j$ is the child residue of $y$, and if $r,s$
are the grandchild residues of $x,z$, then reducing the equation modulo $3$
gives
\begin{equation}\label{eq:mod3-digit-rule}
 j\equiv i+s-r\pmod3.
\end{equation}
The improvement over \cref{thm:both-signs-general} comes from remembering
the three separate class sizes at each split instead of keeping only their
total.

Throughout this section, set
\begin{equation}\label{eq:c-eta-rho}
 c=\frac{47}{122},
 \qquad
 \eta=\frac{19}{732}=\frac{c-1/3}{2},
 \qquad
 \rho=\frac{1-c}{2}=\frac13-\eta=\frac{75}{244}.
\end{equation}
\subsection{The loss from an uneven split into three residue classes}
For three nonnegative numbers $\mathbf x=(x_0,x_1,x_2)$, define
\begin{equation}\label{eq:def-three-class-J}
 J(\mathbf x;w)=\sum_{0\le i<j\le2}\Phi(x_i,x_j,w).
\end{equation}
This adds the three possible bounds obtained by choosing two of the three
residue classes for two variables and using a third set of size $w$.
Now let $\mathbf q=(q_0,q_1,q_2)$ consist of three nonnegative numbers with sum
$1$.  Define
\begin{equation}\label{eq:def-F-delta-beta}
 F(w)=J((1/3,1/3,1/3);w)
 =
 \begin{cases}
 w-\dfrac34w^2,&0\le w\le2/3,\\[1mm]
 1/3,&w\ge2/3,
 \end{cases}
\end{equation}
\begin{equation}\label{eq:def-delta}
 \delta(\mathbf q)=\frac12\left(\sum_{i=0}^2q_i^2-\frac13\right),
\end{equation}
and
\begin{equation}\label{eq:def-beta}
 \beta(w)=
 \begin{cases}
 9w-14w^2,&0\le w\le3/8,\\[1mm]
 1,&w>3/8.
 \end{cases}
\end{equation}
The downward jump in $\beta$ at $w=3/8$ is intentional: at the endpoint the
stronger quadratic loss remains available, whereas for $w>3/8$ the argument
uses only the uniform factor $1$.

The number $\delta(\mathbf q)$ measures how unequal the three parts are.  It is
zero exactly when $q_0=q_1=q_2=1/3$.
\begin{lemma}[Loss caused by an uneven three-way split]
\label{lem:uneven-three-split}
For every such $\mathbf q$ and every $w\ge0$,
\begin{equation}\label{eq:uneven-three-split}
 F(w)-J(\mathbf q;w)
 \ge \beta(w)\min\{\delta(\mathbf q),\eta\}.
\end{equation}
\end{lemma}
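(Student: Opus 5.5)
We prove \eqref{eq:uneven-three-split} by reducing $J(\mathbf q;w)$ to explicit piecewise quadratics and then treating two regimes of $w$.

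\textbf{Step 1: reduce to a quadratic.} Order the parts as $q_0\ge q_1\ge q_2$. By \eqref{eq:Phi-positive-part},
\[
 J(\mathbf q;w)=\sum_{i<j}\Bigl(q_iq_j-\tfrac14\pos{q_i+q_j-w}^2\Bigr)+\text{(nonnegative correction terms)}.
\]
The correction terms $\frac14\pos{q_i-q_j-w}^2$ can only increase $J$, so they must be kept. They vanish when $w\ge q_0$, and for smaller $w$ they are supported on a region we can list. Since $\sum_{i<j}q_iq_j=\frac12(1-\sum q_i^2)=\frac13-\delta(\mathbf q)$, the main part of $J$ equals $\frac13-\delta-\frac14\sum_{i<j}\pos{q_i+q_j-w}^2$.

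\textbf{Step 2: large $w$ ($w>3/8$).} Here $\beta=1$, and we must show $F(w)-J\ge\min\{\delta,\eta\}$.

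For $w\ge2/3$ we have $F=1/3$. Also $J\le\frac13-\delta$, because $\Phi(u,v,w)\le uv$ whenever $w\ge|u-v|$, and this holds since every $q_i\le1$ differs from the others by at most $1$. When $w\ge1$ the claim follows immediately. When $2/3\le w<1$, the possible correction term $\pos{q_0-q_1-w}^2$ needs checking: it is nonzero only if $q_0>w\ge2/3$. In that case $\delta\ge\frac12(4/9-1/3)=1/18>\eta$, and a direct bound finishes it.

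For $3/8<w<2/3$ the target becomes
\[
 \tfrac14\sum_{i<j}\pos{q_i+q_j-w}^2-\tfrac34w^2+w-\tfrac13+\delta\ \ge\ \min\{\delta,\eta\}.
\]
Consider the function $g(\mathbf q)=\frac14\sum\pos{q_i+q_j-w}^2+\delta$. It is convex in $\mathbf q$ and symmetric, so on the slice $\{\delta=\text{const}\}$ its minimum behaviour is governed by the boundary of that slice. I will parametrize $\mathbf q$ by $\delta$ together with an angle on the circle $\sum q_i^2=\text{const}$, and minimize over the angle. The minimizers should be the configurations $(x,x,1-2x)$, each with two equal parts. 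Either $\delta<\eta$, where the bound must hold with slack $\delta$, or $\delta\ge\eta$, where it suffices to check $\delta=\eta$ by monotonicity in the radius.

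\textbf{Step 3: small $w\le3/8$.} Here $F=w-\frac34w^2$ and the loss factor is $9w-14w^2$. Many of the $\pos{q_i+q_j-w}$ terms are active, and the differences $q_i-q_j-w$ can be positive, so the correction terms really contribute and $J$ is piecewise with several regimes. I will restrict to the two-equal-parts families $(x,x,1-2x)$ and justify this by a smoothing or convexity argument as in Step 2. That reduces everything to one-variable polynomial inequalities in $(x,w)$, which can be checked on each piece by examining the endpoints and the interior critical points.

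The constants $9$ and $14$ are presumably tuned so that equality, or near-equality, occurs at $w=3/8$ or at a specific interior point, which I will locate to guide the check.

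\textbf{Main obstacle.} The hardest part is the reduction to symmetric configurations $(x,x,1-2x)$ in the small-$w$ range. There $J$ is neither convex nor concave in $\mathbf q$, because the correction terms enter with the opposite sign. If smoothing fails, I will fall back to a case split by which of the positive-part terms are active. This gives finitely many polynomial regions, and I will bound each one separately using $\delta=\frac12\sum(q_i-\frac13)^2$.
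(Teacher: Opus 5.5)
There is a genuine gap, and it is where you placed your ``main obstacle.'' The Step~3 reduction to configurations $(x,x,1-2x)$ is not merely unproved; it is false. At fixed $\delta(\mathbf q)$ and $w$, the minimum of $F(w)-J(\mathbf q;w)$ need not occur with two equal parts. Take $\delta=\eta$ and $w=0.26$. Both two-equal-part configurations with this $\delta$ give $F-J=\tfrac32\eta-\tfrac12(\sqrt{3\eta}-w)^2\approx0.0388$. The arithmetic-progression split $\mathbf q=(\tfrac13+\sqrt\eta,\tfrac13,\tfrac13-\sqrt\eta)$ gives $F-J=\tfrac32\eta-\tfrac14(2\sqrt\eta-w)^2\approx0.0380$, which is smaller. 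So checking the families $(x,x,1-2x)$ says nothing about general $\mathbf q$. Your fallback case split is only a plan, and in the region $\delta>\eta$ it would still need a monotonicity statement.

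There is a related gap in Step~2. For $3/8<w<2/3$ your target drops the terms $\tfrac14\pos{q_i-q_j-w}^2$, which you yourself said must be kept. They are nonzero whenever $q_i-q_j>w$, which is possible once $\delta>\eta$. So the convexity and ``monotonicity in the radius'' you invoke for $g$ control only the main part of $J$, not $J$ itself. (A minor point: $\Phi(u,v,w)\le uv$ holds for every $w$, so $J\le\tfrac13-\delta$ always and your case split for $w\ge2/3$ is unnecessary.)

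The missing idea is a reduction to $\delta(\mathbf q)\le\eta$ that works for $J$ itself. Moving two coordinates toward each other with their sum fixed cannot decrease $J(\mathbf q;w)$. The terms involving the third coordinate are handled by concavity of $t\mapsto\Phi(t,a,w)$. The balanced pair itself satisfies $\Phi(m+t,m-t,w)-\Phi(m,m,w)=-t^2+\tfrac14\pos{2|t|-w}^2\le0$. The radial contraction of $\mathbf q$ to $\mathbf q'$ with $\delta(\mathbf q')=\eta$ is a composition of such moves, so $F-J(\mathbf q)\ge F-J(\mathbf q')$ and it suffices to treat $\delta\le\eta$.

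Once $\delta\le\eta$, you have $s=q_0-q_2<1/3$ and every $q_i+q_j>4/9$. Hence for $w\ge s$ the correction terms vanish, and the cases go as follows:
\begin{itemize}
\item For $s\le w\le3/8$, $F-J=\tfrac32\delta$ exactly.
\item For $3/8<w\le2/3$, $F-J\ge\delta$ by Jensen applied to $\pos{v-r_i}^2$ with $\sum r_i=0$ and $v=2/3-w$. This is the clean form of your convexity idea.
\item For $w<s$, use the exact identity $F-J=\tfrac32\delta-\tfrac14\bigl(\pos{A-w}^2+\pos{B-w}^2+(s-w)^2\bigr)$. Together with $\pos{A-w}\le A(1-w/s)$ and the same for $B$, this gives $F-J\ge\delta(3z-\tfrac32z^2)$ for every shape at once, where $z=w/s\ge3w$.
\end{itemize}
No reduction to symmetric configurations is needed.
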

\begin{proof}
First suppose $\delta(\mathbf q)\le\eta$, and reorder the three parts so that
$q_0\ge q_1\ge q_2$.  Put
\[
 A=q_0-q_1,
 \qquad B=q_1-q_2,
 \qquad s=A+B=q_0-q_2.
\]
A direct calculation gives
\begin{equation}\label{eq:delta-AB}
 \delta(\mathbf q)=\frac{A^2+AB+B^2}{3},
 \qquad
 s^2\le4\delta(\mathbf q)\le4\eta<\frac19.
\end{equation}
Thus $s<1/3$.  Also $q_1,q_2\ge q_0-s$, so
$q_0\le(1+2s)/3<5/9$.  It follows that every sum $q_i+q_j$ is greater
than $4/9$.

We divide the proof according to the size of $w$.  First suppose
$0\le w<s$, and put $z=w/s$.  Equation
\eqref{eq:Phi-positive-part} gives the exact identity
\begin{equation}\label{eq:small-w-exact}
 F(w)-J(\mathbf q;w)
 =\frac32\delta(\mathbf q)
 -\frac14\left((A-w)_+^2+(B-w)_+^2+(s-w)^2\right).
\end{equation}
Since
\[
 (A-w)_+\le A(1-z),
 \qquad
 (B-w)_+\le B(1-z),
\]
and $A^2+B^2+s^2=6\delta(\mathbf q)$, we obtain
\[
 F(w)-J(\mathbf q;w)
 \ge\delta(\mathbf q)\left(3z-\frac32z^2\right).
\]
Because $s<1/3$, we have $z=w/s\ge3w$.  The function
$3z-3z^2/2$ increases on $[0,1]$, and therefore
\[
 F(w)-J(\mathbf q;w)
 \ge\delta(\mathbf q)\left(9w-\frac{27}{2}w^2\right)
 \ge\delta(\mathbf q)(9w-14w^2).
\]
Next suppose $s\le w\le3/8$.  Each triple $(q_i,q_j,w)$ satisfies the
triangle inequalities, so \eqref{eq:Phi-positive-part} simplifies to
\begin{equation}\label{eq:middle-w-three}
 F(w)-J(\mathbf q;w)=\frac32\delta(\mathbf q).
\end{equation}
This is enough because $9w-14w^2\le81/56<3/2$ on this interval.
For $3/8<w\le2/3$, write $q_i=1/3+r_i$ and put $v=2/3-w$.
Another use of \eqref{eq:Phi-positive-part} gives
\begin{equation}\label{eq:large-w-three}
 F(w)-J(\mathbf q;w)
 =\delta(\mathbf q)-\frac34v^2
 +\frac14\sum_{i=0}^2(v-r_i)_+^2.
\end{equation}
The function $x\mapsto(v-x)_+^2$ is convex and $\sum_i r_i=0$, so the
sum is at least $3v^2$.  Hence
$F(w)-J(\mathbf q;w)\ge\delta(\mathbf q)$.  The same conclusion holds for $w\ge2/3$,
because
\[
 J(\mathbf q;w)\le\sum_{i<j}q_iq_j
 =\frac13-\delta(\mathbf q)=F(w)-\delta(\mathbf q).
\]
Since $\beta(w)=1$ for $w>3/8$, this proves the lemma when
$\delta(\mathbf q)\le\eta$.

It remains to remove that assumption.  Moving two coordinates closer
together while keeping their sum fixed cannot decrease $J(\mathbf q;w)$.  For the
terms involving the third coordinate, this follows from the concavity of
$t\mapsto\Phi(t,a,w)$.  Indeed, this function is linear for
$t\le |a-w|$, quadratic with second derivative $-1/2$ for
$|a-w|\le t\le a+w$, and constant for $t\ge a+w$, with nonincreasing
one-sided slopes at the junctions.  For the term involving the two
coordinates being balanced, write them as $m+t$ and $m-t$; then
\[
 \Phi(m+t,m-t,w)-\Phi(m,m,w)
 =-t^2+\frac14(2|t|-w)_+^2\le0.
\]
If $\delta(\mathbf q)>\eta$, move $\mathbf q$ toward the equal split by setting
\[
 \mathbf q'=\left(\frac13,\frac13,\frac13\right)
 +\sqrt{\frac{\eta}{\delta(\mathbf q)}}
 \left(\mathbf q-\left(\frac13,\frac13,\frac13\right)\right).
\]
Write $\theta=\sqrt{\eta/\delta(\mathbf q)}$.  If $q_i>1/3$, then
$q_i'=1/3+\theta(q_i-1/3)$ lies between $1/3$ and $q_i$; if $q_i<1/3$,
then $q_i'$ lies between $q_i$ and $1/3$.  Transfer the required mass from
coordinates above $1/3$ to coordinates below $1/3$, stopping whenever one
coordinate reaches its target value.  Each transfer moves the chosen pair
closer together without passing equality, and at most two transfers are
needed.  Thus this movement is a composition of the balancing operations
just described, so $J(\mathbf q;w)\le J(\mathbf q';w)$.  Since $\delta(\mathbf q')=\eta$, applying
the case already proved to $\mathbf q'$ gives \eqref{eq:uneven-three-split}.
\end{proof}

We need to use the preceding loss at two consecutive levels of the
modulo-$3$ splitting.  To carry the information from one level to the next, define the nonnegative
correction
\begin{equation}\label{eq:def-V}
 V(\mathbf q)=\left(c-\sum_{i=0}^2q_i^2\right)_{+}
 =2\pos{\eta-\delta(\mathbf q)}.
\end{equation}
It is largest near an equal split and vanishes once the three parts are
sufficiently unequal.  For two possible sizes $u,v$ of the third set,
define
\begin{equation}\label{eq:def-K}
 K(u,v)=F(u)+F(v)-\eta\min\{2,\beta(u)+\beta(v)\}.
\end{equation}
\begin{lemma}[Combining two target sizes]\label{lem:two-target-sizes}
For every three-part distribution $\mathbf q$ and all $u,v\ge0$,
\begin{equation}\label{eq:two-target-sizes}
 J(\mathbf q;u)+J(\mathbf q;v)-V(\mathbf q)\le K(u,v).
\end{equation}
\end{lemma}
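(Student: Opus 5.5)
The plan is to apply \cref{lem:uneven-three-split} once for the target size $u$ and once for $v$, add the two bounds, and then absorb the correction $V(\mathbf q)$ by a linear interpolation in $\delta(\mathbf q)$.

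Write $\delta=\delta(\mathbf q)$ and $S=\beta(u)+\beta(v)$. First I would note that $\beta\ge0$ everywhere. On $[0,3/8]$ the quadratic $9w-14w^2$ is concave and vanishes at $w=0$. At $w=3/8$ it equals $216/64-126/64>0$. Hence it is nonnegative on the whole interval, and $\beta=1$ beyond it. So $S\ge0$.

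\Cref{lem:uneven-three-split} gives $J(\mathbf q;w)\le F(w)-\beta(w)\min\{\delta,\eta\}$ for each $w\in\{u,v\}$. Adding the two bounds and using $V(\mathbf q)=2\pos{\eta-\delta}$ from \eqref{eq:def-V}, we get
\[
 J(\mathbf q;u)+J(\mathbf q;v)-V(\mathbf q)
 \le F(u)+F(v)-\Bigl(S\min\{\delta,\eta\}+2\pos{\eta-\delta}\Bigr).
\]
It therefore suffices to show that the bracketed loss is at least $\eta\min\{2,S\}$.

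There are two cases.

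If $\delta\ge\eta$, the bracket equals $S\eta$. This is at least $\eta\min\{2,S\}$ because $\eta>0$.

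If $0\le\delta<\eta$, the bracket equals $S\delta+2(\eta-\delta)$. This is an affine function of $\delta$ on $[0,\eta]$. Its endpoint values are $2\eta$ at $\delta=0$ and $S\eta$ at $\delta=\eta$. An affine function on an interval is bounded below by the smaller endpoint value, so the bracket is at least $\eta\min\{2,S\}$.

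In both cases, comparing with the definition \eqref{eq:def-K} of $K(u,v)$ gives \eqref{eq:two-target-sizes}.

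I do not expect a real obstacle here. The analytic work is already contained in \cref{lem:uneven-three-split}. The only points needing care are the sign of $\beta$, which is handled above, and the fact that the lemma applies for every $w\ge0$, including the jump of $\beta$ at $w=3/8$. Both are covered by the statement of that lemma as given.
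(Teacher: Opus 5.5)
Your proof is correct and is the same argument as the paper's. You apply \cref{lem:uneven-three-split} at $w=u$ and $w=v$ and then show $S\min\{\delta,\eta\}+2(\eta-\delta)_+\ge\eta\min\{2,S\}$; the paper states this in one line, and you verify it by checking the endpoints of an affine function of $\delta$. The check that $\beta\ge0$ is not actually needed, but your implicit restriction to $\delta\ge0$ is, and it holds because $\sum_i q_i^2\ge1/3$ whenever $\sum_i q_i=1$.
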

\begin{proof}
Let $B=\beta(u)+\beta(v)$ and $\delta=\delta(\mathbf q)$.  Applying
\cref{lem:uneven-three-split} twice gives
\[
 J(\mathbf q;u)+J(\mathbf q;v)
 \le F(u)+F(v)-B\min\{\delta,\eta\}.
\]
After subtracting $V(\mathbf q)=2(\eta-\delta)_+$, the largest possible value of
the remaining $\delta$-dependent expression is
$-\eta\min\{2,B\}$.  This is precisely the definition of $K(u,v)$.
\end{proof}

\subsection{The inequality at one three-way split}
The next lemma is the algebraic heart of the sharper bound.  Suppose a node
is divided into three children whose relative sizes are $p_0,p_1,p_2$.
The left side below combines the correction assigned to the node with the
bounds for copies that use each child.  The right side is $c$ times the
fraction of ordered pairs that lie in different children.

\begin{lemma}[The inequality needed at one split]
\label{lem:one-three-way-split}
For every $\mathbf p=(p_0,p_1,p_2)$ with $p_i\ge0$ and $\sum_i p_i=1$,
\begin{equation}\label{eq:one-three-way-split}
 V(\mathbf p)+\sum_{i=0}^2p_i^2
 K\left(\frac{p_j}{p_i},\frac{p_k}{p_i}\right)
 \le c\left(1-\sum_{i=0}^2p_i^2\right),
\end{equation}
where $\{i,j,k\}=\{0,1,2\}$.  If $p_i=0$, the corresponding term is
understood as its limiting value.
\end{lemma}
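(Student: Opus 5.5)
This inequality is finite-dimensional: three weights $p_0,p_1,p_2$ on the simplex, with $K$ built from the piecewise-quadratic $F$ and the piecewise-defined $\beta$. I would prove it by a direct case analysis over the simplex, using symmetry to order $p_0\ge p_1\ge p_2$ and working in the region where all the piecewise definitions are fixed.

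\textbf{Step 1: rewrite the $i$-th term.} With $w_1=p_j/p_i$ and $w_2=p_k/p_i$, I would expand
\[
 p_i^2K(w_1,w_2)=p_i^2F(p_j/p_i)+p_i^2F(p_k/p_i)-\eta p_i^2\min\{2,\beta(p_j/p_i)+\beta(p_k/p_i)\}.
\]
Since $F(w)=w-\tfrac34w^2$ for $w\le2/3$, we have $p_i^2F(p_j/p_i)=p_ip_j-\tfrac34p_j^2$ when $p_j\le\tfrac23p_i$, and $p_i^2/3$ otherwise. Summing the $F$-parts over ordered pairs gives a piecewise quadratic in $\mathbf p$. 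At the equal split every ratio is $1>2/3$, so the $F$-part totals $3\cdot\tfrac19\cdot\tfrac23=\tfrac29$. Also $\beta=1$ there, so the correction part is $-2\eta\cdot3\cdot\tfrac19=-\tfrac{2\eta}{3}$, and $V=2\eta$. The left side is therefore $\tfrac29+\tfrac43\eta$, while the right side is $c\cdot\tfrac23=\tfrac23(\tfrac13+2\eta)=\tfrac29+\tfrac43\eta$. So equality holds at the center. This is consistent with how $\eta$ was chosen, and it means the inequality is tight there.

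\textbf{Step 2: control near the center.} Near the center all ratios exceed $3/8$, and exceed $2/3$, so $\beta=1$ and $F=1/3$ throughout. The left side becomes
\[
 2(\eta-\delta)_+ + \tfrac23\sum p_i^2 - 2\eta\sum p_i^2,
\]
and the right side is $c(1-\sum p_i^2)$. Writing $\sum p_i^2=\tfrac13+2\delta$, the inequality becomes linear in $\delta$ in each regime of $(\eta-\delta)_+$. I would check the sign of the slopes there, using $c=\tfrac13+2\eta$ and $\rho=\tfrac13-\eta$. The expected outcome is that the difference is $-(\text{const})\,\delta\le0$.

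\textbf{Step 3: the remaining regions.} Once some ratio $p_j/p_i$ drops below $2/3$, or below $3/8$, the $F$ and $\beta$ terms change form. Here I would split into finitely many polygonal regions determined by the thresholds $p_j=\tfrac23p_i$, $p_j=\tfrac38p_i$, and $\delta=\eta$. In each region the difference RHS$-$LHS is a rational function of degree at most $2$ after clearing $p_i$; the terms $p_i^2\beta(p_j/p_i)=9p_ip_j-14p_j^2$ are quadratic. On each region I would show nonnegativity by checking vertices and edges where the function is concave, or by finding the interior critical point where it is convex. The boundary cases $p_2=0$ would be handled by taking limits, and there $F(0)=0$ and $\beta(0)=0$.

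\textbf{Main obstacle.} I expect the hardest part to be the intermediate region with one small class, roughly $p_2\approx\tfrac38p_0$. There the jump in $\beta$ at $3/8$ and the cap $\min\{2,\cdot\}$ interact, and the margin is likely tiny, as with the $1/32400$-type margins in \cref{lem:numeric-99}. I would first reduce dimension by fixing $p_2$ and optimizing over the split of $p_0+p_1$, since the expression is piecewise concave in that direction. I would then verify the resulting one-variable inequalities exactly with rational arithmetic.
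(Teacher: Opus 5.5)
\textbf{Verdict: the proposal has a genuine gap.} You describe a plan for a region-by-region verification, but the lemma consists of exactly that verification. Nothing in your proposal establishes the inequality in the regime $V(\mathbf p)=0$, which is where all the content lies.

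\textbf{There is no margin to exploit.} Your Step~2 computation, carried out, does not give $-(\mathrm{const})\,\delta$. It gives left side $=$ right side identically whenever every ratio $p_j/p_i\ge\frac23$, because $1-2\rho=c$. More importantly, equality also holds at the interior point $\mathbf p=\frac1{61}(42,12,7)$. There $p_1/p_0=\frac27$, $p_2/p_0=\frac16$ and $p_2/p_1=\frac7{12}$, which is nowhere near $p_2\approx\frac38p_0$. At this point $V=0$, and the cap $\min\{2,\cdot\}$ is active in the $i=0$ term. Near it, the right side minus the left side is exactly
\[
 \frac1{732}\Bigl[648\Bigl(p_2-\frac7{54}(p_0+p_1)\Bigr)^2+\frac{61}9(7p_1-2p_0)^2\Bigr],
\]
a degenerate positive semidefinite form. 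Viewed as a function of $c$ (with $\eta,\rho$ tied to $c$), its value at that point is $c-\frac{47}{122}$; this is what pins down the constant.
\begin{itemize}
\item A vertex-and-edge check cannot detect this, and a check relying on a small positive margin has nothing to work with. You need the exact square decomposition, which your plan never produces.
\item Your dimension-reduction step is also unfounded. Along $e_0-e_1$ the difference has second derivative $+\frac32$ near this point but $-\frac83$ at $(\tfrac12,\tfrac25,\tfrac1{10})$. So it is neither piecewise concave nor piecewise convex in that direction.
\end{itemize}

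\textbf{Your decomposition is not polygonal.} The set $\delta=\eta$ is a circle. You also omit the conic $\beta(p_j/p_i)+\beta(p_k/p_i)=2$ on which the cap switches; note that $\beta$ crosses $1$ at ratio $\frac17$.

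\textbf{How the paper handles it.} It avoids every curved boundary by proving each branch of the positive part and of the minimum globally.
\begin{itemize}
\item If $V>0$, the identity $1-2\rho=c$ turns the claim into $\sum_ip_i^2(2\rho-K_i)\ge0$. The bound $K\le2\rho$ follows from $F(t)-\eta\min\{\beta(t),1\}\le\rho$, using $F(\tfrac17)=\tfrac{25}{196}<\rho$.
\item If $V=0$, order $p_0\ge p_1\ge p_2$ and write $\mathbf p=(1,x,xr)/d$. After multiplying by $d^2$, the $i=2$ term becomes $2\rho x^2r^2$. Then $732\,d^2$ times (right side minus left side) equals $\min\{R(x)+R(xr),S(x)+S(xr)\}+x^2\min\{R(r),S(r)\}$. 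Here $R(t)=732\bigl(2ct-\rho t^2-F(t)+\eta\beta(t)\bigr)$ and $S=R-19(\beta-1)$.
\end{itemize}
Each of the four resulting combinations is certified by an explicit completion of squares. One of them vanishes exactly at $(x,r)=(\tfrac27,\tfrac7{12})$. That exact algebraic certificate is the missing idea.
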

\begin{proof}
The proof is an explicit quadratic calculation.  We first sort the three
parts and express their ratios using two variables.  The maximum with zero in $V$ then gives two cases.  One case follows from a simple uniform bound.  In
the other, the relevant functions are quadratic on three intervals; after
clearing denominators, only four expressions remain to be checked.
Set
\begin{equation}\label{eq:def-H-g}
 H(t)=F(t)-\eta\beta(t),
 \qquad
 g(t)=\beta(t)-1.
\end{equation}
Then
\begin{equation}\label{eq:K-Hg}
 K(u,v)=H(u)+H(v)+\eta(g(u)+g(v))_+.
\end{equation}
For $t\ge1$, we have $F(t)=1/3$, $\beta(t)=1$, and hence
$H(t)=\rho$ and $g(t)=0$.  For every $t\ge0$,
\begin{equation}\label{eq:single-rho}
 H(t)+\eta(g(t))_+
 =F(t)-\eta\min\{\beta(t),1\}\le\rho.
\end{equation}
Indeed, if $\beta(t)\ge1$, use $F(t)\le1/3$.  If $\beta(t)<1$, then
$t<1/7$, and the left side is at most
$F(1/7)=25/196<\rho$.

By symmetry, assume $p_0\ge p_1\ge p_2$.  The cases with a zero coordinate
are obtained by taking $r\downarrow0$ when $p_2=0<p_1$, or
$x\downarrow0$ when $p_1=p_2=0$, in the parameterization below.  Terms with
the corresponding squared prefactors vanish, and the remaining terms have
finite one-sided limits.  It therefore suffices to write
\begin{equation}\label{eq:param-p}
 \mathbf p=\frac{(1,x,xr)}d,
 \qquad d=1+x+xr,
 \qquad 0<x,r\le1,
\end{equation}
and put
\begin{equation}\label{eq:def-Q}
 Q=x+xr+x^2r.
\end{equation}
Thus $2Q/d^2=1-\sum_i p_i^2$.  After multiplying
\eqref{eq:one-three-way-split} by $d^2$, the right side minus the left side is
\begin{equation}\label{eq:def-Gamma}
 \Gamma=2cQ-2(Q-\rho d^2)_+
 -K(x,xr)-x^2K(1/x,r)-2\rho x^2r^2.
\end{equation}
We prove $\Gamma\ge0$.

First suppose $Q\ge\rho d^2$.  Equations \eqref{eq:K-Hg} and
\eqref{eq:single-rho}, together with
$(a+b)_+\le a_++b_+$, give
\[
 K(x,xr)\le2\rho,
 \qquad
 K(1/x,r)\le2\rho.
\]
In this case \eqref{eq:def-Gamma} simplifies to
\[
 \Gamma=(2\rho-K(x,xr))
 +x^2(2\rho-K(1/x,r))\ge0.
\]
Now suppose $Q\le\rho d^2$.  Define two auxiliary functions
\begin{equation}\label{eq:def-RS}
 R(t)=732\bigl(2ct-\rho t^2-H(t)\bigr),
 \qquad
 S(t)=R(t)-19g(t).
\end{equation}
The factor $732$ clears the denominators, and the two functions record the
two possibilities created by the positive part in \eqref{eq:K-Hg}: $R$ is
the inactive case, while $S=R-19g$ is the active case.  Since
$732\eta=19$, direct substitution gives
\begin{equation}\label{eq:min-RS}
 732\Gamma
 =\min\{R(x)+R(xr),S(x)+S(xr)\}
 +x^2\min\{R(r),S(r)\}.
\end{equation}
Introduce the three quadratic polynomials
\begin{equation}\label{eq:def-ABC}
 A(t)=3t+58t^2,
 \qquad
 B(t)=(6t-1)(54t-19),
 \qquad
 C(t)=-3(75t^2-188t+75).
\end{equation}
Substituting the definitions of $F$ and $\beta$ yields
\begin{equation}\label{eq:R-pieces}
 R(t)=
 \begin{cases}
 A(t),&0\le t\le3/8,\\
 B(t),&3/8<t\le2/3,\\
 C(t),&2/3<t\le1,
 \end{cases}
 \qquad
 S(t)=
 \begin{cases}
 B(t),&0\le t\le2/3,\\
 C(t),&2/3<t\le1.
 \end{cases}
\end{equation}
These formulas imply
\begin{equation}\label{eq:RS-basic}
 R(t)\ge0,
 \qquad
 R(t)\ge\frac{25}{9}t^2,
 \qquad
 S(t)\ge-\frac{25}{9}
 \qquad(0\le t\le1).
\end{equation}
For the last bound, note that
\begin{equation}\label{eq:B-square}
 B(t)=324\left(t-\frac7{27}\right)^2-\frac{25}{9}.
\end{equation}
Also $S(t)=C(t)\ge51$ when $t>2/3$.

Expanding the two minima in \eqref{eq:min-RS} gives four possible sums.
Two follow immediately from \eqref{eq:RS-basic}:
\[
 R(x)+R(xr)+x^2R(r)\ge0
\]
and
\[
 R(x)+R(xr)+x^2S(r)
 \ge R(x)-\frac{25}{9}x^2\ge0.
\]
For the third sum, we prove
\begin{equation}\label{eq:SS-goal}
 S(x)+S(xr)+x^2S(r)\ge0.
\end{equation}
If $x>2/3$, the bound $S(x)\ge51$ and
$S\ge-25/9$ make this immediate.  Suppose $x\le2/3$, so also
$xr\le2/3$.  If $r\le2/3$, all three terms use the polynomial $B$, and
\begin{equation}\label{eq:critical-sos}
 \begin{aligned}
 B(x)+B(xr)+x^2B(r)
 ={}&648\left(xr-\frac{7(x+1)}{54}\right)^2\\
 &+\frac{61}{9}(7x-2)^2\ge0.
 \end{aligned}
\end{equation}
If $r>2/3$, put $a(r)=99r^2+564r+99$.  Then
\begin{equation}\label{eq:SS-large-r}
 \begin{aligned}
 B(x)+B(xr)+x^2C(r)
 ={}&a(r)\left(x-\frac{84(1+r)}{a(r)}\right)^2\\
 &+\frac{366(20r-9r^2-9)}{a(r)}\ge0.
 \end{aligned}
\end{equation}
The final numerator is positive on $[2/3,1]$.

The remaining sum is
\begin{equation}\label{eq:SR-goal}
 S(x)+S(xr)+x^2R(r).
\end{equation}
If $r\ge1/7$, then $g(r)\ge0$, so $R(r)\ge S(r)$, and
\eqref{eq:SS-goal} proves the claim.  Suppose $r<1/7$.  If $x>2/3$,
then $S(x)\ge51$, while $S(xr)>0$ and $R(r)\ge0$.  If $x\le2/3$, put
$d(r)=382r^2+3r+324$.  Here $S(x)=S(xr)=B$ and $R(r)=A$, and
\begin{equation}\label{eq:SR-small-r}
 \begin{aligned}
 B(x)+B(xr)+x^2A(r)
 ={}&d(r)\left(x-\frac{84(1+r)}{d(r)}\right)^2\\
 &+\frac{2(3730r^2-6999r+2628)}{d(r)}\ge0.
 \end{aligned}
\end{equation}
The final quadratic decreases and remains positive on $[0,1/7]$.
Thus all four possible sums in \eqref{eq:min-RS} are nonnegative, proving
\eqref{eq:one-three-way-split}.
\end{proof}
The coefficient $47/122$ is chosen so that the most difficult expression
above becomes the sum of squares in \eqref{eq:critical-sos}.  Equality in
that identity away from the equal split occurs at
\begin{equation}\label{eq:equality-distribution}
 x=\frac27,
 \qquad
 r=\frac7{12},
 \qquad
 \mathbf p=\frac1{61}(42,12,7).
\end{equation}
Equivalently, choosing $\eta=19/732$ and
$c=1/3+2\eta=47/122$ produces exactly the two squares in
\eqref{eq:critical-sos}.

\subsection{Summing over all three-way splits}

\begin{proof}[Proof of \cref{thm:47-122}]
Return to the repeated splitting modulo $3$.  At a node $v$, let $N_v$ be
the number of elements at that node, let
$\mathbf p=(p_0,p_1,p_2)$ be the fractions in its three children, and let
$\mathbf q_i$ be
the three fractions among the grandchildren inside child $i$.  Let $C_v$
be the number of solutions assigned to $v$.  The congruence
\eqref{eq:mod3-digit-rule} and \cref{lem:three-set} give
\begin{equation}\label{eq:node-count-relative}
 C_v
 \le N_v^2\sum_{i=0}^2p_i^2
 \left[
 J\left(\mathbf q_i;\frac{p_j}{p_i}\right)
 +J\left(\mathbf q_i;\frac{p_k}{p_i}\right)
 \right]+\frac92,
\end{equation}
where $\{i,j,k\}=\{0,1,2\}$.  There are $18$ prescribed choices of
residue classes, and each application of \cref{lem:three-set} contributes
at most $1/4$, which explains the additive term $18/4=9/2$.

Assign to the node the correction
\begin{equation}\label{eq:def-U-D}
 U_v=N_v^2V(\mathbf p),
 \qquad
 D_v=N_v^2\left(1-\sum_i p_i^2\right).
\end{equation}
The number $D_v$ is the number of ordered pairs of elements in different
children.  The correction assigned to child $i$ is
$(N_vp_i)^2V(\mathbf q_i)$.  Applying \cref{lem:two-target-sizes} inside each child
and then \cref{lem:one-three-way-split} to the three child sizes gives
\begin{equation}\label{eq:node-correction}
 C_v+U_v-\sum_iU_{vi}
 \le \frac{47}{122}D_v+\frac92
\end{equation}
at every node with at least two nonempty children.  At a node with only one
nonempty child, the same inequality holds without the additive term.

Now sum this inequality over the whole tree.  Every correction at an
internal node appears once with a plus sign at that node and once with a
minus sign at its parent, so these terms cancel.  The corrections at leaves
are zero, and the correction at the root is nonnegative.  As before, every
ordered pair of distinct elements first separates at exactly one node, so
\[
 \sum_vD_v=n^2-n.
\]
There are at most $n-1$ nodes with at least two nonempty children.  Hence
\[
 M(A)+U_{\mathrm{root}}
 \le \frac{47}{122}(n^2-n)+\frac92(n-1).
\]
Dropping the nonnegative root correction proves \cref{thm:47-122}.
\end{proof}

\section*{Acknowledgements}

The author supplied the underlying residue-splitting idea and provided
high-level guidance on the direction of the work. GPT-5.6 Pro
was used extensively to generate and refine the detailed proofs, carry
out the technical calculations, and assist with drafting and revision.
The author independently checked the final arguments and calculations
and assumes full responsibility for all mathematical claims and any
remaining errors.

\end{document}